\numberwithin{equation}{section} 
\theoremstyle{plain}
\newtheorem{thm}{Theorem}[section]
\newtheorem*{thm*}{Theorem}
\newtheorem{prop}[thm]{Proposition}
\newtheorem*{prop*}{Proposition}
\newtheorem{lemma}[thm]{Lemma}
\newtheorem*{lemma*}{Lemma}
\newtheorem{cor}[thm]{Corollary}
\newtheorem*{cor*}{Corollary}
\theoremstyle{definition}
\newtheorem{dfn}[thm]{Definition}
\newtheorem*{dfn*}{Definition}
\newtheorem{rems}[thm]{Remark}
\newtheorem*{rems*}{Remark}
\newtheorem{ex}[thm]{Example}
\newtheorem*{ex*}{Example}
\newtheorem*{conj*}{Conjecture}
\newtheorem*{cond*}{Condition}
\newtheorem*{not*}{Notation}
\newcommand{\bl}{\ensuremath{\xi}}
\DeclareMathOperator{\Dom}{Dom}   
\DeclareMathOperator{\Tr}{Tr}     
\DeclareMathOperator{\tr}{tr}     
\newlength{\indentation} \setlength{\indentation}{-1\parindent}
\newcounter{propcount}
\newcounter{prop2count}
\newenvironment{prop2list}[3]{\begin{list}{\normalfont{(\roman{prop2count})}}
	{\usecounter{prop2count}
	\setlength{\topsep}{4pt} \setlength{\itemsep}{#3pt}
	\setlength{\parsep}{2pt} \setlength{\labelsep}{3mm}
	\setlength{\leftmargin}{#1mm}
	\setlength{\rightmargin}{#2mm}}}
       {\end{list}}
\newlength{\tablength} \setlength{\tablength}{2.5mm}
\newcommand{\nm}[1]{\mbox{\ensuremath{\| #1 \|}}}
\newcommand{\fa}{\ensuremath{\ \, \forall \,}}
\newcommand{\inset}[2]{\ensuremath{\{ #1 \, | \, #2 \} }}
\newcommand{\inprod}[2]{\ensuremath{\langle #1 , #2 \rangle}}
\newcommand{\ceil}[1]{\ensuremath{\left\lceil #1 \right\rceil}}
\newcommand{\floor}[1]{\ensuremath{\left\lfloor #1 \right\rfloor}}
\newcommand{\CC}{\ensuremath{\mathbb{C}}}
\newcommand{\TT}{\ensuremath{\mathbb{T}}}
\newcommand{\SB}{\ensuremath{\mathbb{S}}}
\newcommand{\ZZ}{\ensuremath{\mathbb{Z}}}
\newcommand{\NN}{\ensuremath{\mathbb{N}}}
\begin{document}

\title[Residues and a Characterisation of the Noncomm. Integral]{Noncommutative Residues and a Characterisation of the Noncommutative Integral}


\author{Steven Lord}
\address{School of Mathematical Sciences \\
University of Adelaide \\
Adelaide 5005 \\
Australia. (Corresponding Author).}
\thanks{This research was supported by the Australian Research Council}
\curraddr{}
\email{steven.lord@adelaide.edu.au}

\author{Fedor A.~Sukochev}
\address{School of Mathematics and Statistics \\
University of New South Wales \\
Sydney 2052 \\
Australia.}
\curraddr{}
\email{f.sukochev@unsw.edu.au}

\subjclass[2000]{Primary 46L51, 47B10, 58J42; Secondary 46L87}

\date{}

\dedicatory{}

\commby{M.~Varghese}

\begin{abstract}
We continue the study of the relationship between Dixmier traces and noncommutative residues initiated by A.~Connes.
The utility of the residue approach to Dixmier traces
is shown by a characterisation
of the noncommutative integral in Connes' noncommutative geometry (for a wide class of Dixmier traces) as a generalised limit of
vector states associated to the eigenvectors of a compact operator (or an unbounded operator with compact resolvent), i.e.~as a generalised quantum limit.
Using the characterisation, a criteria
involving the eigenvectors of a compact operator and the projections of a von Neumann subalgebra of bounded operators is given so that the noncommutative integral associated to the compact operator is \emph{normal}, i.e.~satisfies a monotone convergence theorem, for the von Neumann subalgebra.
\end{abstract}

\keywords{Dixmier Trace, Zeta Functions, Noncommutative Integral,
             						Noncommutative Geometry, Normal, Noncommutative Residue}

\maketitle


\section{Introduction}

For a separable complex Hilbert space $H$ denote by $\mu_{n}(T)$, $n \in \NN$, the singular values of a positive compact operator $T$, (\cite{S}, \S 1).  A.~Connes introduced the association between a generalised zeta function,
$$
\zeta_T(s) := \Tr(T^s) = \sum_{n=1}^\infty \mu_{n}(T)^s ,
$$
and the logarithmic divergence of the partial sums,
$$
\left\{ \sum_{n=1}^{N} \mu_{n}(T) \right\}_{N=1}^\infty ,
$$
with the result that
\begin{equation} \label{eq:res2}
\lim_{s \to 1^+} (s-1) \zeta_T(s) = \lim_{N \to \infty} \frac{1}{\log(1+N)} \sum_{n=1}^{N} \mu_{n}(T)
\end{equation}
if either limit exists, (\cite{CN}, p.~306).  In \cite{LSS}, with co-author A.~Sedaev, we showed the right
hand side of equation (\ref{eq:res2}) is the Dixmier trace for Connes' notion of measurable operator, i.e.~an operator $T \in \mathcal{L}^{1,\infty}:=\inset{T}{\nm{T}_{1,\infty}:=\sup_{n \in \NN} \log(1+n)^{-1} \sum_{j=1}^n \mu_j(T) < \infty}$ is called measurable if the value of a Dixmier trace $\Tr_\omega(T)$, \cite{Dix}, (\cite{C3}, p.~674), is independent
of the `invariant mean' (dilation invariant state) $\omega$ on $\ell^\infty$, (\cite{CN}, Def 7 p.~308). 
As a result
\begin{equation} \label{eq:res0}
\Tr_\omega(T) = \lim_{s \to 1^+} (s-1) \zeta_T(s)
\end{equation}
enables the calculation of the Dixmier trace of any measurable operator $0 < T \in \mathcal{L}^{1,\infty}$ as the residue at $s=1$ of the zeta function $\zeta_T$.  We should note that it was Connes, in
(\cite{CN}, pp.~303-308), that showed a dilation invariant state on
$\ell^\infty$ was sufficient to define a Dixmier trace.  In practice
Connes used a Dixmier trace defined by a more restricted class of states involving Ces\`{a}ro means.  It was shown in \cite{LSS} that the (weaker) notion of measurable from Connes' smaller class of states involving Ces\`{a}ro means, the notion of measurable from dilation invariant states, or the notion of measurable from any larger class of generalised limits, were all equivalent, see (\cite{LSS}, \S 5.3), or (\cite{LSS}, Thm 6.6) in particular.

A.~Carey, J.~Phillips and the second author, by the content of \cite{CPS}, extended the formula (\ref{eq:res2}) 
to non-measurable operators (in the sense of Connes).  The results were a generalisation
to $\tau$-compact operators for von Neumann algebras with faithful normal semifinite trace $\tau$.  In this setting the $s$-numbers $\mu_s(T)$
of a $\tau$-compact operator, the generalisation of singular values, are continuous instead of discrete and one considers the Dixmier trace as an
expression $\tr_\upsilon(T) := \upsilon( \frac{1}{\log(1+t)} \int_1^t \mu_s(T)ds )$
for a dilation invariant state $\upsilon$ on  $L^\infty([1,\infty))$.  We phrase the extension of (\ref{eq:res2}) in the language of $B(H)$
(the bounded linear operator on $H$).
For $A \in B(H)$ and $0 < T \in \mathcal{L}^{1,\infty}$, define
$$
\zeta_{A,T}(s) := \Tr(AT^s),
$$
then (\cite{CPS}, Thm 3.8) states
\begin{equation} \label{eq:res1}
\tr_\upsilon(AT) = \tilde{\upsilon}\text{-}\lim_{s \to 1^+} (s-1)\zeta_{A,T}(s)
\end{equation}
for a `maximally invariant mean' (a dilation, power and Ces\`{a}ro invariant state) 
$\upsilon$ of $L^\infty([1,\infty))$.
The notation $\tilde{\upsilon}\text{-}\lim_{s \to 1^+} f(s)$ stands for $\tilde{\upsilon}(f(1+t^{-1}))$ where $\tilde{\upsilon}(f(t)) := \upsilon(f(\log t))$ for $f \in L^\infty([0,\infty))$, and $\tr_\upsilon(AT)$ stands for the linear extension of the weight
$\tr_\upsilon(\sqrt{A}T\sqrt{A})$, $A>0$.  Conditions on $\upsilon$ were reduced to dilation and power invariance in a later text of Carey, A.~Rennie, Sedaev and the second author, see (\cite{CRSS}, Thm 4.11).  However, (\cite{CRSS}, Thm 4.11) achieved (\ref{eq:res1}) \emph{only} for $A=1$.
  
In this note we show the utility of the noncommutative residue to the study of the noncommutative integral (taken in most texts on noncommutative geometry to be given by the lhs of (\ref{eq:res0}) or (\ref{eq:res1})).

Our first task will be to prove that
(\cite{CRSS}, Thm 4.11) can achieve the formula (\ref{eq:res1}) for $A \not=1$ with the same weakened conditions on the generalised limit $\upsilon$.  This is shown in Corollary \ref{cor:resAcont}.  
We also adapt the formula (\ref{eq:res1}) to
the class $\mathcal{L}(BL \cap DL)$ of `dilation and power invariant' states on $\ell^\infty$. The preliminaries will make the notation $\mathcal{L}(BL \cap DL)$ apparent.  This is done in Corollary \ref{cor:resA}.
The adaptation is important, since it shows that
the generalisations in \cite{CPS} and \cite{CRSS} to semifinite von Neumann algebras apply to the `original' type I
construction of Dixmier (used originally by Connes in \cite{C3}).  This step
is not entirely trivial.  There are subtle distinctions between Dixmier traces involving the discrete values
$\mu_n(T)$ and states on $\ell^\infty$ and those involving the function $\mu_{s} = \mu_{\floor{s}}(T)$ ($\floor{s}$ is the floor function)
and states on $L^\infty([1,\infty))$, even though they provide equivalent sets of traces, see (\cite{LSS}, Thm 6.2).

With the correspondence between noncommutative residues and
the noncommutative integral (the lhs of (\ref{eq:res0}) or (\ref{eq:res1})) firmly in hand we use residues to show two analytic results.

The first result is a structure result for the noncommutative integral.  
Assume the situation is non-trivial, i.e.~$0 < T \in \mathcal{L}^{1,\infty}$
with $\Tr_\omega(T) > 0$.
For $A \in B(H)$ set
\begin{equation} \label{eq:normint}
\phi_\omega(A) := \frac{\Tr_\omega(AT)}{\Tr_\omega(T)}.
\end{equation}
Note
$\phi_\omega$ is a state of $B(H)$.  Then Theorem \ref{thm:structure} says that, when $\omega \in \mathcal{L}(BL \cap DL)$,
\begin{equation} \label{eq:intchar}
\phi_\omega(A) = L_\omega( \inprod{h_m}{Ah_m} )
\end{equation}
where $\{ h_m \}_{m=1}^\infty$ is any complete orthonormal system
of eigenvectors for $T$ (\cite{S}, \S 1) and $L_\omega$ is a generalised{} limit.
The characterisation (\ref{eq:intchar}) shows,
when the sequence $\{ \inprod{h_m}{Ah_m} \}_{m=1}^\infty$ is
convergent at infinity, the state $\phi_\omega$ is \emph{uniquely} and completely characterised by the \emph{eigenvectors} of $T$.
The \emph{eigenvalues} of $T$ are linked solely to the scale factor $\Tr_\omega(T)$.  The flat $1$-torus and the noncommutative torus provide examples in the text.   This is a revealing insight.
Weyl's formula on the asymptotics of the eigenvalues of the Laplacian has been cited as the staring point of integration in noncommutative geometry, see for example (\cite{GBVF}, \S 7.6).  However,
the eigenvectors of the Laplacian, not the eigenvalues, turn out to be
the critical determinants of the value obtained by `integration'.

The second result we obtain is normality
criteria for the noncommutative integral.  While $\phi_\omega$ is a state of $B(H)$,
it is not a normal state.  This is problematic for an `integral', since monotone and dominated convergence cannot be applied. The normality of $\phi_\omega$ on proper weakly closed $^*$-subalgebras of $B(H)$ is an open question.
The characterisation (\ref{eq:intchar}) of $\phi_\omega$, for $\omega \in \mathcal{L}(BL \cap DL)$,
as a generalised limit of the states $A \mapsto \inprod{h_m}{Ah_m}$,
gives us valuable purchase.  
Let $\mathcal{M} \subset B(H)$ be a von Neumann algebra.
We say a positive compact operator
$T$ is $(\mathcal{M},h)$-dominated if there exists $h \in H$
such that $\nm{Ph_m} \leq \nm{Ph}$ for all projections $P \in \mathcal{M}$.
Think here of $H = L^2(F,\mathcal{B},\mu)$ for a $\sigma$-finite
measure space $(F,\mathcal{B},\mu)$ and $\mathcal{M}$ as multiplication operators of $L^\infty$-functions.  Then $T$ being $(\mathcal{M},h)$-dominated
is the same as $\int_J |h_m(x)|^2 d\mu(x)
\leq \int_J |h(x)|^2 d\mu(x)$ for all $J \subset \mathcal{B}$,
which is equivalent to the statement $|h_m|^2$ are
dominated by $|h|^2 \in L^1(F,\mathcal{B},\mu)$ $\mu$-a.e..  In Theorem
\ref{thm:normM} we show that if $T$ is $(\mathcal{M},h)$-dominated, then $\Tr_\omega( \cdot T) \in \mathcal{M}_*$.

\section{Preliminaries}

\subsection{Preliminaries on (Discrete) Dixmier Traces} \label{sec:prelim}

Let $\ceil{x}$, $x \geq 0$, denote the ceiling function.
Define the maps $\ell^\infty \to \ell^\infty$ for $j \in \NN$ by
\begin{eqnarray*}
T_j( \{a_k\}_{k=1}^\infty ) & = & \{ a_{k+j} \}_{k=1}^\infty \\
D_j( \{a_k \}_{k=1}^\infty) & = & \{ a_{\ceil{j^{-1}k}} \}_{k=1}^\infty .
\end{eqnarray*}
Set $BL := \inset{0 < \omega \in (\ell^{\infty})^*}{\omega(1) = 1, \omega \circ T_j = \omega \ \forall j \in \NN}$ (the set of Banach Limits)
and $DL := \inset{0 < \omega \in (\ell^{\infty})^*}{\omega(1) = 1, \omega \circ D_j = \omega \ \forall j \in \NN}$.
Both sets of states on $\ell^\infty$ satisfy
\begin{equation} \label{eq:genL}
\liminf_k a_k \leq \omega(\{ a_k \}_{k=1}^\infty) \leq \limsup_k a_k \ , a_k \geq 0 .
\end{equation}
Any state of $\ell^\infty$ satisfying (\ref{eq:genL}) is termed a generalised limit since it extends $\lim$ on $c$ to $\ell^\infty$.
Let $0 < T \in \mathcal{L}^{1,\infty}$.  Set $\gamma(T) := \left\{ \log(1+k)^{-1} \sum_{j=1}^k \mu_j(T)\right\}_{k=1}^\infty \in \ell^\infty$
and define $DL_2 := \inset{0 < \omega \in (\ell^{\infty})^*}{\omega(1) = 1,
\omega \text{ satisfies (\ref{eq:genL}), and }
\omega(D_2(\gamma(T))) = \omega(\gamma(T)) \fa 0 < T \in \mathcal{L}^{1,\infty}}$.  
From (\cite{CN}, pp.~304-305) or (\cite{AF}, Prop 5.2), for any $\omega \in DL_2$,
$$
\Tr_\omega(T) := \omega(\gamma(T))
$$
defines a tracial weight.  Denote by $\Tr_\omega$ as well the linear extension.  Then $\Tr_\omega$ is a finite trace on $\mathcal{L}^{1,\infty}$ that vanishes on the separable part $\mathcal{L}_0^{1,\infty}$.  The separable part $\mathcal{L}_0^{1,\infty}$ is the closure of finite rank operators in the norm $\nm{\cdot}_{1,\infty}$.
The condition that $\omega \in DL_2$ is weaker than the condition that
$\omega$ be dilation invariant, and weaker than Dixmier's original condition that
$\omega \in BL \cap DL$, \cite{Dix}.

\subsection{Preliminaries on (Continuous) Dixmier Traces} \label{sec:res.1}

For $a > 0$ define the maps $L^\infty([0,\infty)) \to L^\infty([0,\infty))$ by
\begin{eqnarray*}
T_a(f)(t) & = & f(t+a) \\
D_a(f)(t) & = & f(a^{-1}t) .
\end{eqnarray*}
Let $\phi$ be a state on $L^\infty([0,\infty))$ satisfying:
\begin{prop2list}{12}{0}{2}
\item $\phi(g) = 0$ for $g \in C_0([0,\infty))$;
\item $\text{ess.-}\liminf_{t \to \infty} g(t) \leq \phi(g) \leq \text{ess.-}\limsup_{t \to \infty} g(t)$
for $0 < g \in L^\infty([0,\infty))$;
\item $\phi(g) = \phi(T_a(g))$, $a > 0$, for $g \in L^\infty([0,\infty))$.
\end{prop2list}
Then $\phi$ is called a (continuous) Banach limit,
$\phi \in BL[0,\infty)$, (\cite{LSS}, \S 1.1), (\cite{CPS}, \S 1.2).  If $\phi$ satisfies
\begin{prop2list}{12}{0}{2}
\item[\textrm{(iii)'}] $\phi(g) = \phi(D_a(g))$, $a > 0$, for $g \in L^\infty([0,\infty))$
\end{prop2list}
instead of (iii), we denote this by $\phi \in DL[0,\infty)$.
Define $L^{-1} : L^\infty([1,\infty)) \to L^\infty([0,\infty))$ by
$$
L^{-1}(g)(t) = g(e^t)
$$
and
$$
L(\phi) := \phi \circ L^{-1} .
$$
It is known $T_{a} L^{-1} = L^{-1} D_{e^{-a}}$ and $D_{a^{-1}} L^{-1} = L^{-1} P^{a}$, $a \geq 1$, where
\begin{eqnarray*}
P^a(f)(t) & = & f(t^{a}),
\end{eqnarray*}
see (\cite{CPS}, \S 1.1).  Thus $L$ provides isometries
$BL[0,\infty) \to DL[1,\infty)$ and $DL[0,\infty) \to P[1,\infty)$ where the notations should be evident.

Let $0 < T \in \mathcal{L}^{1,\infty}$.  Set $\Gamma(T)(t) := \log(1+t)^{-1} \int_1^t \mu_s(T) ds$ where $\mu_s(T)$ are the $s$-numbers of $T$ relative to the canonical trace $\Tr$ on $B(H)$. Denote $L(\phi) \in DL_2[1,\infty)$ if $\phi$ satisfies (i) and (ii) above
and $L(\phi)(D_2(\Gamma(T))) = L(\phi)(\Gamma(T)) \fa 0 < T \in \mathcal{L}^{1,\infty}$.  
From (\cite{LSS}, \S 6) and (\cite{AF}, \S 5), for any $L(\phi) \in DL_2[1,\infty)$,
$$
\tr_{L(\phi)}(T) := L(\phi)(\Gamma(T))
$$
defines a tracial weight.  Denote by $\tr_{L(\phi)}$ as well the linear extension.  Then $\tr_{L(\phi)}$ is a finite trace on $\mathcal{L}^{1,\infty}$ that vanishes on the separable part $\mathcal{L}_0^{1,\infty}$.  It is evident if $\phi \in BL[0,\infty)$ then $L(\phi) \in DL_2[1,\infty)$.

\section{The Main Results}

We state the extension of (\cite{CRSS}, Thm 4.11).  For brevity we state the result only for $B(H)$.  The statement and proof for a general semifinite von Neumann algebra is apparent.  The proofs of Theorems \ref{thm:resPcont} and \ref{thm:resPA} and Corollaries
\ref{cor:resAcont} and \ref{cor:resA} are reserved for the technical section, Section \ref{sec:restech}.

\begin{thm} \label{thm:resPcont}
Let $P$ be a projection and $0 < T \in \mathcal{L}^{1,\infty}$.
For any $\phi \in BL[0,\infty) \cap DL[0,\infty)$,
$$
\tr_{L(\phi)}(PTP) = \phi \left( \frac{1}{r} \Tr(PT^{1+\frac{1}{r}}P) \right)  .
$$
Moreover, $\lim_{s \to 1^+} (s-1) \Tr(PT^{s}P)$ exists iff $PTP$ is measurable
and in either case 
$$
\tr_{\upsilon}(PTP) = \lim_{s \to 1^+} (s-1) \Tr(PT^{s}P) 
$$
for all $\upsilon \in DL_2[1,\infty)$.
\end{thm}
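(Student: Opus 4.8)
The plan is to reduce Theorem~\ref{thm:resPcont} to the known result (\cite{CRSS}, Thm 4.11), which gives the residue formula for the case $A=1$, by absorbing the projection $P$ into a new positive $\tau$-compact operator. First I would observe that for a projection $P$ and $0<T\in\mathcal{L}^{1,\infty}$, the operator $PT^{1/2}$ is Hilbert--Schmidt and $PTP$ is unitarily equivalent (via the polar decomposition of $T^{1/2}P$) to the operator $S:=(PT^{1/2})^*(PT^{1/2})=T^{1/2}PT^{1/2}$ on $H$; in particular $0\le S\in\mathcal{L}^{1,\infty}$ and $\Tr(PT^{s}P)=\Tr((PT^{1/2})T^{s-1}(T^{1/2}P))$. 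The subtle point is that $\zeta_S(s)=\Tr(S^{s})\ne\Tr(PT^{s}P)$ in general; what is actually true is the cyclicity-of-trace identity $\Tr((T^{1/2}PT^{1/2})^{s})=\Tr((PT PT\cdots)^{\,\cdot})$ only for integer powers, so one cannot naively replace $PT^sP$ by $S^s$. Instead I would work directly with the zeta-type function $\zeta_{P,T}(s):=\Tr(PT^sP)=\Tr(P T^{s})$ and establish the first displayed identity by the same Tauberian/Laplace-transform machinery used in \cite{CPS} and \cite{CRSS}: write $(s-1)\Tr(PT^s P)$ as a transform of the function $t\mapsto \frac{1}{t}\Tr(PT^{1+1/t}P)$ and show that $\phi$ applied to the latter agrees with $\phi$ applied to $\frac{1}{\log(1+t)}\int_1^t\mu_u(T^{1/2}PT^{1/2})\,du=\Gamma(S)(t)$, using that $\tr_{L(\phi)}(PTP)=\tr_{L(\phi)}(S)$ by the trace property and unitary equivalence.

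Concretely, the first step is the \emph{residue identity}. Since $\phi\in BL[0,\infty)\cap DL[0,\infty)$, we have $L(\phi)\in DL[1,\infty)\cap P[1,\infty)$, i.e.\ $L(\phi)$ is dilation- and power-invariant, which is precisely the hypothesis on the generalised limit in (\cite{CRSS}, Thm 4.11). Applying that theorem to the positive operator $S=T^{1/2}PT^{1/2}\in\mathcal{L}^{1,\infty}$ gives
$$
\tr_{L(\phi)}(S)=\phi\!\left(\frac{1}{r}\,\Tr\!\bigl(S^{1+\frac1r}\bigr)\right).
$$
Then I would show two matching identities: on the left, $\tr_{L(\phi)}(S)=\tr_{L(\phi)}(PTP)$, which follows because $S$ and $PTP$ have the same nonzero singular values (they are $|T^{1/2}P|^2$ and $|PT^{1/2}|^2$ respectively and $AB$, $BA$ share spectrum away from $0$), so $\Gamma(S)=\Gamma(PTP)$ pointwise; on the right, $\Tr(S^{1+1/r})=\Tr(PT^{1+1/r}P)$, which is the delicate point. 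This last equality is \emph{not} true from cyclicity alone for non-integer exponents, so the honest route is to prove the residue formula directly for $\zeta_{P,T}$: one writes $\Tr(PT^sP)-\Tr(S^s)$, shows it extends holomorphically past $s=1$ with no pole (both have at worst a simple pole at $s=1$ with the \emph{same} residue, since the residue only sees the leading singular-value asymptotics, which coincide), and concludes the two transforms $\phi(r^{-1}\Tr(PT^{1+1/r}P))$ and $\phi(r^{-1}\Tr(S^{1+1/r}))$ agree because they differ by $\phi$ of a function vanishing at infinity, killed by axiom~(i) of $BL[0,\infty)$.

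For the \textbf{second part}, the equivalence ``$\lim_{s\to1^+}(s-1)\Tr(PT^sP)$ exists $\iff$ $PTP$ is measurable'' and the resulting formula for all $\upsilon\in DL_2[1,\infty)$: the forward direction is immediate from the first part, since if the limit exists then $\phi(r^{-1}\Tr(PT^{1+1/r}P))$ equals that limit for every $\phi\in BL[0,\infty)\cap DL[0,\infty)$, hence $\tr_{L(\phi)}(PTP)$ is independent of such $\phi$; one then upgrades this to independence over all of $DL_2[1,\infty)$ using the equivalence of the various notions of measurability proved in (\cite{LSS}, Thm 6.6) together with the fact, recorded in the preliminaries, that $\phi\in BL[0,\infty)$ implies $L(\phi)\in DL_2[1,\infty)$ and that these suffice to detect measurability. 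For the converse, if $PTP$ is measurable then $\tr_\upsilon(PTP)$ is a fixed number $c$ for all $\upsilon\in DL_2[1,\infty)$; by the first part, the bounded function $r\mapsto r^{-1}\Tr(PT^{1+1/r}P)$ has $\phi$-value $c$ for \emph{every} $\phi\in BL[0,\infty)\cap DL[0,\infty)$, and a standard argument (a function all of whose dilation-and-translation-invariant means agree is Cesàro-convergent, hence convergent after the logarithmic change of variables back to the $s$-variable) forces $\lim_{s\to1^+}(s-1)\Tr(PT^sP)=c$.

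The main obstacle is the non-commutative subtlety in the residue identity: one must not conflate $\Tr(PT^sP)$ with $\Tr(S^s)$ at non-integer $s$, and the clean way around it is to argue that the \emph{only} thing the transform $\phi(r^{-1}\Tr(\cdot^{1+1/r}))$ remembers is the $t^{-1}$-asymptotics of the partial-sum function, which is governed by the $s$-number asymptotics of $PTP$ (equivalently of $S$) and by $\mathcal{L}^{1,\infty}$-membership; once that is isolated, invoking (\cite{CRSS}, Thm 4.11) and the translation/dilation bookkeeping from Section~\ref{sec:res.1} finishes the proof, and the passage to all of $DL_2[1,\infty)$ is the routine application of the measurability equivalences of \cite{LSS}.
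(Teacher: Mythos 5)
Your overall strategy --- apply (\cite{CRSS}, Thm 4.11) to a positive operator with the same singular values as $PTP$, then reconcile $\Tr(PT^{s}P)$ with the zeta function of that positive operator --- is the same as the paper's, which applies the theorem to $V=PTP$ itself (since $P$ is a projection, $\sqrt{P}T\sqrt{P}=PTP$, so your detour through $S=T^{1/2}PT^{1/2}$ is unnecessary). But your justification of the reconciliation step, which you correctly identify as the delicate point, does not work. You propose to show $\Tr(PT^{s}P)-\Tr(S^{s})$ ``extends holomorphically past $s=1$ with no pole'' because ``both have at worst a simple pole at $s=1$ with the same residue.'' For a general $0<T\in\mathcal{L}^{1,\infty}$ neither function admits any meromorphic continuation beyond $\Re s>1$ --- the entire point of the generalised-limit formalism is that $\lim_{s\to 1^{+}}(s-1)\zeta(s)$ need not exist --- so there are no poles or residues to compare, and ``the residue only sees the leading singular-value asymptotics'' is not meaningful for $\Tr(PT^{s}P)$, which is not a function of the singular values of any single operator. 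The statement you actually need, namely that $r^{-1}\Tr(PT^{1+1/r}P)-r^{-1}\Tr((PTP)^{1+1/r})$ tends to $0$ at infinity (equivalently, that the two functions have the same value under every $\phi\in BL[0,\infty)$), is a genuine operator-theoretic estimate; it is precisely the content of (\cite{CPS}, Prop 3.6), which the paper invokes as Lemma \ref{prop:projcont} and which you would have to prove rather than assert.

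The second gap is in the converse half of the measurability equivalence. From ``$\tr_{\upsilon}(PTP)=c$ for all $\upsilon$'' you deduce that $r^{-1}\Tr(PT^{1+1/r}P)$ takes the value $c$ under every $\phi\in BL[0,\infty)\cap DL[0,\infty)$, and then claim a ``standard argument'' forces actual convergence. No such elementary argument exists: agreement of all invariant means of a bounded function yields only almost convergence in the sense of Lorentz, not convergence, and the implication ``measurable $\Rightarrow$ Tauberian'' is a nontrivial theorem. The paper's chain is: $PTP$ measurable $\iff$ $PTP$ Tauberian by (\cite{LSS}, Cor 3.9) $\iff$ the residue of $\zeta_{PTP}$ exists at $s=1$ by Connes' formula (\ref{eq:res2}) $\iff$ $\lim_{r\to\infty}r^{-1}\Tr(PT^{1+1/r}P)$ exists by Lemma \ref{prop:projcont}. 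You need to cite (\cite{LSS}, Cor 3.9) at this point; the Ces\`{a}ro-convergence argument you sketch would, if valid, prove that every measurable operator has convergent partial-sum averages by pure soft analysis, which is exactly the hard content being imported.
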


\begin{dfn}
Let $0 < T \in \mathcal{L}^{1,\infty}$.  We say $T$
is \emph{spectrally measurable} w.r.t.~$A \in B(H)$ (in the sense of Connes) if $PTP$ is measurable for all projections $P$ in the von Nuemann algebra generated by $A$ and $A^*$.
\end{dfn}

\begin{cor} \label{cor:resAcont}
Let $A \in B(H)$ and $0 < T \in \mathcal{L}^{1,\infty}$.
For any $\phi \in BL[0,\infty) \cap DL[0,\infty)$,
$$ 
\tr_{L(\phi)}(AT) = \phi \left( \frac 1r \zeta_{A,T}\left( 1+\frac{1}{r} \right) \right) .
$$
Moreover, $AT$ is measurable if $T$ is spectrally measurable w.r.t.~$A$
and
$$
\tr_{\upsilon}(AT) = \lim_{s \to 1^+} (s-1) \zeta_{A,T}(s) 
$$
for all $\upsilon \in DL_2[1,\infty)$.
\end{cor}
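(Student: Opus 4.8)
The plan is to reduce both assertions to the projection case already settled in Theorem~\ref{thm:resPcont}: the asymptotic identity by a norm-density argument in $A$, and the measurability statement by controlling the limit $s\to1^+$ uniformly along a norm-approximating sequence of $A$.

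For the identity I would set $F(A):=\tr_{L(\phi)}(AT)$ and $G(A):=\phi\bigl(\frac1r\zeta_{A,T}(1+\frac1r)\bigr)$, $A\in B(H)$, and check three things. First, both are linear in $A$: $F$ because $\tr_{L(\phi)}(\cdot\,T)$ is by definition the linear extension of a weight, $G$ because $\zeta_{A,T}(s)=\Tr(AT^s)$ is linear in $A$ and $\phi$ is linear. Second, both are bounded for the operator norm: for $A\ge0$, $0\le\sqrt{A}\,T\sqrt{A}\le\nm{A}\,T$ gives $0\le F(A)\le\nm{A}\,\tr_{L(\phi)}(T)$, hence $|F(A)|\le4\nm{A}\,\tr_{L(\phi)}(T)$ after splitting a general $A$ into four positive parts; and $|\Tr(AT^s)|\le\nm{A}\,\Tr(T^s)=\nm{A}\,\zeta_T(s)$ together with $\phi$ being a state gives $|G(A)|\le\nm{A}\sup_{r\ge1}\frac1r\zeta_T(1+\frac1r)$, the supremum being finite because $0<T\in\mathcal{L}^{1,\infty}$ (equivalently $\sup_{s\in(1,2]}(s-1)\zeta_T(s)<\infty$). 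Third, $F$ and $G$ agree on projections: for a projection $P$, traciality and $P^2=P$ give $F(P)=\tr_{L(\phi)}(PT)=\tr_{L(\phi)}(PTP)$ and $\zeta_{P,T}(s)=\Tr(PT^s)=\Tr(PT^sP)$, so Theorem~\ref{thm:resPcont}, applied with $\phi\in BL[0,\infty)\cap DL[0,\infty)$, yields $F(P)=\phi\bigl(\frac1r\Tr(PT^{1+\frac1r}P)\bigr)=G(P)$. Since the spectral theorem makes finite real-linear combinations of projections norm dense in the self-adjoint part of $B(H)$, the complex span of projections is norm dense in $B(H)$, and two bounded linear functionals agreeing there coincide; hence $F=G$ on $B(H)$, which is the first assertion.

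For the measurability statement I would assume $T$ is spectrally measurable w.r.t.~$A$ and write $A=S_1+iS_2$ with $S_j=S_j^*$ in the von Neumann algebra $\mathcal{N}$ generated by $A$ and $A^*$. Fixing $j$: every spectral projection $P$ of $S_j$ lies in $\mathcal{N}$, so $PTP$ is measurable, and the second part of Theorem~\ref{thm:resPcont} gives that $\lim_{s\to1^+}(s-1)\Tr(PT^sP)$ exists, equals $\lim_{s\to1^+}(s-1)\zeta_{P,T}(s)$ (as $P^2=P$), and equals $\tr_\upsilon(PTP)$ for all $\upsilon\in DL_2[1,\infty)$. By linearity the same holds for every finite real-linear combination $B=\sum_kc_kP_k$ of such projections. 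Next I would take $B_n\to S_j$ in operator norm (Riemann--Stieltjes sums of the spectral resolution of $S_j$) and use the uniform estimate $|(s-1)\zeta_{S_j,T}(s)-(s-1)\zeta_{B_n,T}(s)|\le\nm{S_j-B_n}\,(s-1)\zeta_T(s)\le C\,\nm{S_j-B_n}$, valid for $s\in(1,2]$ with $C:=\sup_{s\in(1,2]}(s-1)\zeta_T(s)<\infty$, to conclude that $\lim_{s\to1^+}(s-1)\zeta_{S_j,T}(s)$ exists and is the limit of the numbers $\lim_{s\to1^+}(s-1)\zeta_{B_n,T}(s)$. Since $\tr_\upsilon(B_nT)\to\tr_\upsilon(S_jT)$ by norm continuity of $\tr_\upsilon(\cdot\,T)$ and $\tr_\upsilon(B_nT)=\lim_{s\to1^+}(s-1)\zeta_{B_n,T}(s)$, this gives $\tr_\upsilon(S_jT)=\lim_{s\to1^+}(s-1)\zeta_{S_j,T}(s)$ for every $\upsilon\in DL_2[1,\infty)$. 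Adding the $j=1,2$ contributions yields $\tr_\upsilon(AT)=\lim_{s\to1^+}(s-1)\zeta_{A,T}(s)$ for all $\upsilon\in DL_2[1,\infty)$; as the right side is independent of $\upsilon$, $AT$ is measurable.

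The main obstacle will be the interchange of the norm-limit $B_n\to S_j$ with the limit $s\to1^+$ (and, for the identity, the uniform-in-$r$ bound on $G$): both rest on the boundedness of $(s-1)\zeta_T(s)$ as $s\to1^+$, which is exactly the point at which $T\in\mathcal{L}^{1,\infty}$ enters, together with the bare fact that $\tr_\upsilon(\cdot\,T)$ --- though introduced only as a linear extension of a weight --- is bounded for the operator norm of its argument. The rest is bookkeeping with the trace property, the spectral theorem, and Theorem~\ref{thm:resPcont}.
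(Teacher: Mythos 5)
Your proposal is correct and follows essentially the same route as the paper's proof: both reduce to the projection case of Theorem~\ref{thm:resPcont} by approximating $A$ in operator norm by finite linear combinations of projections (in $\mathcal{M}(A)$ for the measurability claim), and both pass to the limit using norm-continuity of $\tr_{L(\phi)}(\cdot\,T)$ and of $\phi\bigl(\frac1r\Tr(\cdot\,T^{1+\frac1r})\bigr)$, the latter controlled by $\sup_{s\in(1,2]}(s-1)\zeta_T(s)<\infty$. Your uniform-convergence phrasing of the interchange of the limits $B_n\to A$ and $s\to1^+$ is in fact slightly more explicit than the paper's appeal to uniform continuity of $\lim_{r\to\infty}\frac1r\Tr(\cdot\,T^{1+\frac1r})$ on $\mathcal{M}(A)$, but the underlying mechanism is identical.
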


We now state the adaptation to `original' type I (discrete) Dixmier traces.
Define the averaging sequence $E : L^\infty([0,\infty)) \to \ell^\infty$ by
$$
E_k(f) :=  \int_{k-1}^k f(t)dt .
$$
Define the floor mapping 
$
p : \ell^\infty \to L^\infty([1,\infty))
$
by
$$
p( \{a_k \}_{k=1}^\infty)(t) :=
\sum_{k=1}^\infty  a_k \chi_{[k,k+1)}(t) .
$$
Define, finally, the isometry $\mathcal{L} : (\ell^{\infty})^*
\to (\ell^{\infty})^*$ by
$$
\mathcal{L}(\bl) := \bl \circ E \circ L^{-1} \circ p .
$$
Denote by $\mathcal{L}(BL \cap DL)$ the image of translation and dilation invariant states on $\ell^\infty$ under $\mathcal{L}$.
Unlike the continuous case, it is not evident that $\mathcal{L}(\xi) \in DL_2$ if $\xi \in BL \cap DL$.

\begin{thm} \label{thm:resPA}
Let $P$ be a projection and $0 < T \in \mathcal{L}^{1,\infty}$.
For any $\bl \in BL \cap DL$, $\mathcal{L}(\bl) \in DL_2$ and
$$
\Tr_{\mathcal{L}(\bl)}(PTP) = \bl \left( \frac 1k \Tr(PT^{1+\frac 1k}P) \right) .
$$
Moreover, $\lim_{k \to \infty} \frac 1k \Tr(PT^{1+\frac 1k}P)$ exists iff $PTP$ is measurable and in either case 
$$
\Tr_{\omega}(PTP) = \lim_{k \to \infty} \frac 1k \Tr(PT^{1+\frac 1k}P)
$$
for all $\omega \in DL_2$.
\end{thm}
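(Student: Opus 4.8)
The plan is to reduce to the continuous statement Theorem~\ref{thm:resPcont} through the factorisation $\mathcal{L}(\xi)=L(\psi)\circ p$, where $\psi:=\xi\circ E$ is the state on $L^\infty([0,\infty))$ obtained from $\xi$ by unit-interval averaging. The first point is that $\psi\in BL[0,\infty)$ whenever $\xi\in BL$: conditions (i) and (ii) defining $BL[0,\infty)$ are immediate, since $E$ carries $C_0([0,\infty))$ into $c_0$ and $E_k(g)$ lies between the essential infimum and supremum of $g$ on $[k-1,k]$; for translation invariance (iii) one uses, for $a>0$ with integer part $n$, the exact identity $E_k(T_a g)=E_{k+n}(g)+\big(\alpha_{k+n}-\alpha_{k+n-1}\big)$ with $\alpha_m:=\int_m^{m+a-n}g(t)\,dt$ bounded in $m$, and notes that shift invariance of $\xi$ kills both the integer translation and the telescoping correction $\{\alpha_{k+n}-\alpha_{k+n-1}\}_k$ (of the form $T_1\beta-\beta$). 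Hence $L(\psi)\in DL_2[1,\infty)$ by the remark following the definition of $DL_2[1,\infty)$, so $\tr_{L(\psi)}$ is a tracial weight.

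Next I would identify the two Dixmier weights and settle the $DL_2$-membership. Since $p(\gamma(T))$ differs from $\Gamma(T)$ by a function vanishing at infinity --- replacing $\sum_{j\le k}\mu_j(T)$ by $\int_1^t\mu_s(T)\,ds$ and $\log(1+k)$ by $\log(1+t)$ costs $O(1/\log t)$ --- and $L(\psi)$ annihilates such functions, $\Tr_{\mathcal{L}(\xi)}(T)=\mathcal{L}(\xi)(\gamma(T))=L(\psi)(p\,\gamma(T))=L(\psi)(\Gamma(T))=\tr_{L(\psi)}(T)$ for all $0<T\in\mathcal{L}^{1,\infty}$. Moreover $p$ intertwines the discrete $D_2$ with the continuous dilation $g\mapsto g(\cdot/2)$ up to a function vanishing at infinity (a ceiling discretisation, using $|\gamma(T)_{m+1}-\gamma(T)_m|\to0$), and the latter dilation becomes after $L^{-1}$ the fixed translation $T_{-\log 2}$; as $\psi$ is translation invariant in both directions, $\mathcal{L}(\xi)(D_2\gamma(T))=\mathcal{L}(\xi)(\gamma(T))$. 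Since $\mathcal{L}(\xi)$ is plainly a generalised limit (each $E_m\circ L^{-1}\circ p$ is, for large $m$, a convex average of an ever-receding tail of its argument), $\mathcal{L}(\xi)\in DL_2$.

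For the residue identity itself, Theorem~\ref{thm:resPcont} gives $\tr_{L(\phi)}(PTP)=\phi\big(r^{-1}\Tr(PT^{1+1/r}P)\big)$ for all $\phi\in BL[0,\infty)\cap DL[0,\infty)$; equivalently $\Delta(t):=L^{-1}\Gamma(PTP)(t)-t^{-1}\Tr(PT^{1+1/t}P)$ is in the common kernel of all such $\phi$. Two things then remain. First, one passes from the unit-interval averages of $r\mapsto r^{-1}\Tr(PT^{1+1/r}P)$ to its integer samples $k^{-1}\Tr(PT^{1+1/k}P)$; this is harmless because that function is bounded with variation $O(1/r)$ over unit intervals (differentiating, the terms $r^{-2}\Tr(PT^{1+1/r}P)$ and $r^{-3}\Tr(P\log T\cdot T^{1+1/r}P)$ are each $O(1/r)$, using $\Tr(PT^sP)=O((s-1)^{-1})$ and $\Tr(P|\log T|T^sP)=O((s-1)^{-2})$ as $s\to1^+$), so its averages and samples differ in $c_0$. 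Second, and this is the heart of the matter, one must show $\psi$ also annihilates $\Delta$ --- equivalently that $E(\Delta)$ lies in the common kernel of $BL\cap DL$. Since $\Delta$ need not vanish at infinity and $\psi$ need not lie in $DL[0,\infty)$, one is forced to use the discrete dilation invariance of $\xi$ (not merely $\xi\in BL$) together with the ``slowly oscillating'' structure of $\sigma(t):=\Gamma(PTP)(e^t)$ coming from the monotonicity and $O(\log k)$-growth of $\sum_{j\le k}\mu_j(PTP)$: writing $\Delta$ as a weighted mean of the functions $D_{1/v}\sigma-\sigma$, $v>0$, one must see these are killed by $\mathcal{L}(\xi)$ despite $\psi\notin DL[0,\infty)$. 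I expect this step --- reconciling the $\ell^\infty$- and $L^\infty([1,\infty))$-pictures, exactly the ``subtle distinction'' the Introduction flags --- to be the main obstacle. Granting it, $\Tr_{\mathcal{L}(\xi)}(PTP)=\tr_{L(\psi)}(PTP)=\psi\big(r^{-1}\Tr(PT^{1+1/r}P)\big)=\xi\big(\{k^{-1}\Tr(PT^{1+1/k}P)\}_k\big)$.

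Finally the ``iff''. If $\lim_k k^{-1}\Tr(PT^{1+1/k}P)$ exists, the identity just obtained shows $\Tr_{\mathcal{L}(\xi)}(PTP)$ equals this common value for every $\xi\in BL\cap DL$, a generalised limit respecting ordinary convergence; the coincidence of the various notions of measurability (\cite{LSS}, Thm~6.2) together with the measurability clause of Theorem~\ref{thm:resPcont} then promote this to $\Tr_\omega(PTP)$ being constant, and equal to the limit, over all of $DL_2$ --- that is, $PTP$ is measurable. Conversely, if $PTP$ is measurable, Theorem~\ref{thm:resPcont} supplies the continuous limit $\lim_{s\to1^+}(s-1)\Tr(PT^sP)$, and its restriction to the net $s=1+1/k$ is the sought discrete limit with the same value.
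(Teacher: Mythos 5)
Your reduction is the right one and is essentially the paper's: factor $\mathcal{L}(\bl)$ through $\phi:=\bl\circ E$ and $L^{-1}\circ p$, verify $\mathcal{L}(\bl)\in DL_2$ via the commutator of $p$ with $D_2$ (which the paper does in Proposition \ref{prop:trace}), and pass between unit-interval averages and integer samples. You have also correctly diagnosed the crux: $\phi=\bl\circ E$ is a continuous Banach limit but need \emph{not} lie in $DL[0,\infty)$, so Theorem \ref{thm:resPcont} (equivalently (\cite{CRSS}, Thm 4.11)) cannot simply be applied to $\phi$. But at exactly that point you write ``Granting it'' and offer only a sketch (writing the discrepancy as a weighted mean of $D_{1/v}\sigma-\sigma$) that you do not carry out. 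That is a genuine gap, and it is the central one: without it the identity $\Tr_{\mathcal{L}(\bl)}(PTP)=\bl(k^{-1}\Tr(PT^{1+1/k}P))$ is not established.

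The paper closes this gap in two stages that your proposal does not contain. First, Lemmas \ref{lemma:tech0_1}--\ref{lemma:important} show that the \emph{discrete} translation and dilation invariance of $\bl$ transfers to a \emph{restricted} continuous dilation invariance of $\phi$: $\phi(f)=\phi(E_k(D_a f))=\phi(f)$ for all $a>0$, valid for every function of the form $f(t)=g(t)/t$ with $g$ increasing, the key tool being the elementary oscillation bound $\sup_{[a,b]}f-\inf_{[a,b]}f\le g(b)/a-g(a)/b$; this yields properties (iv) and (iv'), since both $t\mapsto t^{-1}\Tr(QV^{1+1/t}Q)$ and $L^{-1}\Gamma(QVQ)$ have exactly this form. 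Second, Remark \ref{rem:BLsuff} inspects the proof of (\cite{CRSS}, Thm 4.11) --- the weak$^*$-Karamata step via (\cite{CPS}, Thm 2.2) and (\cite{CRSS}, Prop 4.3, Cor 4.4) --- and checks that full dilation/power invariance is only ever applied to functions of that special form, so (iv) and (iv') suffice to rerun the argument and obtain \eqref{eq:thm4.11} for $\phi$. Two smaller points: your derivative estimate for replacing averages by samples is unnecessary (Lemma \ref{lemma:tech0_4} gets this from the same $g/t$ oscillation bound); and in the ``iff'' the paper goes from the existence of $\lim_k h(k)$ to the existence of $\lim_t h(t)$ directly via Lemma \ref{lemma:tech0_2} and then quotes Theorem \ref{thm:resPcont}, rather than routing through a coincidence-of-measurability theorem whose applicability to the specific class $\mathcal{L}(BL\cap DL)$ you would still have to justify.
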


\begin{cor} \label{cor:resA}
Let $A \in B(H)$ and $0 < T \in \mathcal{L}^{1,\infty}$.
For any $\bl \in BL \cap DL$,
$$
\Tr_{\mathcal{L}(\bl)}(AT) = \bl \left( \frac 1k \zeta_{A,T}\left( 1+\frac 1k \right) \right) .
$$
Moreover, $AT$ is measurable if $T$ is spectrally measurable w.r.t.~$A$
and
$$
\Tr_{\omega}(AT) = \lim_{k \to \infty} \frac 1k \zeta_{A,T}\left( 1+\frac 1k \right)
 = \lim_{s \to 1^+} (s-1) \zeta_{A,T}(s) $$
for all $\omega \in DL_2$.
\end{cor}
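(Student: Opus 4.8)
\emph{Proof idea.} The plan is to reduce the first identity to Theorem \ref{thm:resPA} by linearity and an operator-norm approximation, and then to obtain the measurability assertion directly in the discrete picture, invoking the moreover part of Corollary \ref{cor:resAcont} only to locate the limit $\lim_{s\to 1^{+}}(s-1)\zeta_{A,T}(s)$.

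For a projection $P$ one has $\Tr_{\mathcal{L}(\bl)}(PT)=\Tr_{\mathcal{L}(\bl)}(\sqrt P\,T\sqrt P)=\Tr_{\mathcal{L}(\bl)}(PTP)$ and $\zeta_{P,T}(s)=\Tr(PT^{s})=\Tr(PT^{s}P)$, so Theorem \ref{thm:resPA} already gives $\mathcal{L}(\bl)\in DL_{2}$ and $\Tr_{\mathcal{L}(\bl)}(PT)=\bl(\tfrac1k\zeta_{P,T}(1+\tfrac1k))$. The map $A\mapsto\Tr_{\mathcal{L}(\bl)}(AT)$ is by construction the linear extension of the weight $A\mapsto\Tr_{\mathcal{L}(\bl)}(\sqrt A\,T\sqrt A)$, hence a bounded linear functional on $B(H)$; and $A\mapsto\bl(\tfrac1k\zeta_{A,T}(1+\tfrac1k))$ is bounded linear since $A\mapsto\{\tfrac1k\zeta_{A,T}(1+\tfrac1k)\}_{k}\in\ell^{\infty}$ is. Both functionals agree, by linearity, on every finite $\RR$-linear combination $A=\sum_{j}c_{j}P_{j}$ of projections. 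To reach an arbitrary self-adjoint $A$ I would approximate $A$ in operator norm by finite $\RR$-linear combinations of its own spectral projections and use the two uniform Lipschitz bounds $|\Tr_{\mathcal{L}(\bl)}(BT)|\le\|B\|\,\Tr_{\mathcal{L}(\bl)}(T)\le\|B\|\,\|T\|_{1,\infty}$ for self-adjoint $B$ (positivity and traciality of the Dixmier trace, with $\Tr_{\mathcal{L}(\bl)}(T)=\mathcal{L}(\bl)(\gamma(T))\le\|T\|_{1,\infty}$ from $\mathcal{L}(\bl)\in DL_{2}$), and $|\tfrac1k\zeta_{B,T}(1+\tfrac1k)|\le\|B\|\,\tfrac1k\Tr(T^{1+\frac1k})\le C\|B\|$, where $\sup_{k}\tfrac1k\Tr(T^{1+\frac1k})<\infty$ is the standard bound $(s-1)\Tr(T^{s})=O(1)$ as $s\to 1^{+}$ for $T\in\mathcal{L}^{1,\infty}$. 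Both sides of the claimed identity are then norm-continuous in $A$ and agree on an operator-norm-dense set of self-adjoint operators, so they agree for all self-adjoint $A$, and then, decomposing $A$ into real and imaginary parts, for all $A\in B(H)$.

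For the moreover part, assume $T$ is spectrally measurable w.r.t.\ $A$. Writing $A=\Re A+i\,\Im A$ and splitting $\Re A,\Im A$ into positive and negative parts — all of which lie in the von Neumann algebra generated by $A,A^{*}$ — it suffices to treat $0\le A$; then the spectral projections $Q_{l}$ of $A$ lie in that algebra, so each $Q_{l}TQ_{l}$ is measurable with $\lim_{k}\tfrac1k\Tr(Q_{l}T^{1+\frac1k}Q_{l})$ existing and equal to $\Tr_{\omega}(Q_{l}TQ_{l})$ for all $\omega\in DL_{2}$, by the moreover part of Theorem \ref{thm:resPA}. Given $\varepsilon>0$, pick $A'=\sum_{l}c_{l}Q_{l}$ with $\|A-A'\|<\varepsilon$; then $A'T$ has $\omega$-independent Dixmier trace (being an $\RR$-combination of the $Q_{l}TQ_{l}$), and the first Lipschitz bound gives $|\Tr_{\omega}(AT)-\Tr_{\omega'}(AT)|\le 2\varepsilon\|T\|_{1,\infty}$ for all $\omega,\omega'\in DL_{2}$, so $AT$ is measurable. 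Likewise $\tfrac1k\zeta_{A',T}(1+\tfrac1k)=\sum_{l}c_{l}\tfrac1k\Tr(Q_{l}T^{1+\frac1k}Q_{l})$ converges as $k\to\infty$, so by the second Lipschitz bound the sequence $x_{k}:=\tfrac1k\zeta_{A,T}(1+\tfrac1k)$ satisfies $\limsup_{k}x_{k}-\liminf_{k}x_{k}\le 2C\varepsilon$ for every $\varepsilon$, hence converges. Taking $\omega=\mathcal{L}(\bl)$ in the first formula shows the common value $\Tr_{\omega}(AT)$, $\omega\in DL_{2}$, equals $\bl(x_{k})=\lim_{k}x_{k}$. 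Finally, the moreover part of Corollary \ref{cor:resAcont} guarantees that $\lim_{s\to 1^{+}}(s-1)\zeta_{A,T}(s)$ exists; restricting to $s=1+\tfrac1k$ identifies it with $\lim_{k}x_{k}$, completing the chain of equalities.

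The step I expect to be the main obstacle is the passage from projections to a general bounded $A$: one must verify that operator-norm approximation by linear combinations of spectral projections is compatible simultaneously with the $\mathcal{L}$-twisted discrete Dixmier functional and with $\bl\mapsto\bl(\tfrac1k\zeta_{A,T}(1+\tfrac1k))$, which is exactly where the uniform estimates $\Tr_{\mathcal{L}(\bl)}(T)\le\|T\|_{1,\infty}$ and $\sup_{k}\tfrac1k\Tr(T^{1+\frac1k})<\infty$ are needed. A further subtlety, already flagged in the text, is that $DL_{2}$ is strictly larger than $\mathcal{L}(BL\cap DL)$, so $\omega$-independence of $\Tr_{\omega}(AT)$ over all of $DL_{2}$ is not read off the first formula but must be argued through the uniform estimate, as above.
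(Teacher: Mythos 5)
Your proposal is correct and follows essentially the paper's own route: the paper likewise reduces to the projection case (Theorem \ref{thm:resPA}), extends to general $A$ by operator-norm approximation by finite linear combinations of projections in the von Neumann algebra generated by $A$ and $A^*$ together with the uniform continuity of both positive linear functionals, and handles the measurability claim by the same approximation applied to $\lim_{k}\frac1k\Tr(\,\cdot\,T^{1+\frac1k})$. The only cosmetic difference is that you identify the discrete limit with $\lim_{s\to1^{+}}(s-1)\zeta_{A,T}(s)$ by citing Corollary \ref{cor:resAcont}, whereas the paper uses the oscillation estimate from the end of the proof of Theorem \ref{thm:resPA}; both are valid.
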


Here $AT$ measurable means $\Tr_{\omega}(AT)$ is independent of $\omega \in DL_2$.  In Corollary \ref{cor:resAcont} $AT$ measurable meant
$\tr_{\upsilon}(AT)$ independent of $\upsilon \in DL_2[1,\infty)$. 
Spectral measurability is sufficient for equivalence of the two notions when $A \not= P$ (equivalence when $A=P$ was shown in (\cite{LSS}, Cor 3.9)).  

\begin{rems} \label{rem:M_meas}
Let $PTP$ be measurable for all projections $P$ in a von Neumann algebra $\mathcal{M} \subset B(H)$.  It is clear $AT$ is (unambiguously) measurable for all $A \in \mathcal{M}$ since $\mathcal{M}$
contains the von Nuemann algebras generated by $A$ and $A^*$.
\end{rems}

We show two applications of residues.

\subsection{Structure of the noncommutative integral}

Let $0 < T \in \mathcal{L}^{1,\infty}$ be non-trivial,
i.e.~$\Tr_\omega(T) > 0$ for all $\omega \in DL_2$.
Following (\ref{eq:normint}) in the introduction,
\begin{equation*}
\phi_\omega(A) := \frac{\Tr_\omega(AT)}{\Tr_\omega(T)} \ , \ A \in B(H)
\end{equation*}
is a state of $B(H)$.

\begin{thm} \label{thm:structure}
Let $\omega \in \mathcal{L}(BL \cap DL)$.  There is a generalised{}
limit $L_\omega$
such that
$
\phi_\omega(A) = L_\omega \left( \inprod{h_m}{Ah_m} \right)
$
where $\{ h_m \}_{m=1}^\infty$ is any complete orthonormal system of eigenvectors of $T$.
\end{thm}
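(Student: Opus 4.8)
The plan is to reduce the statement to a computation with the zeta function and then extract a generalised limit by hand. First I would apply Corollary \ref{cor:resA}: for $\omega = \mathcal{L}(\bl)$ with $\bl \in BL \cap DL$, we have $\Tr_\omega(AT) = \bl\left( \frac{1}{k} \zeta_{A,T}\left(1+\frac{1}{k}\right) \right)$ and, taking $A = 1$, $\Tr_\omega(T) = \bl\left( \frac{1}{k} \zeta_T\left(1+\frac{1}{k}\right) \right)$. The key observation is that $\zeta_{A,T}(s) = \Tr(AT^s) = \sum_{m=1}^\infty \mu_m(T)^s \inprod{h_m}{Ah_m}$ when $\{h_m\}$ is a complete orthonormal eigenbasis of $T$ with $Th_m = \mu_m(T) h_m$ (ordering the eigenvalues decreasingly, with multiplicity); this is just the trace computed in the eigenbasis, valid for $s > 1$ since $T \in \mathcal{L}^{1,\infty}$ and $A$ is bounded. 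So $\frac{1}{k}\zeta_{A,T}(1+\frac{1}{k})$ is a weighted average of the bounded sequence $\{\inprod{h_m}{Ah_m}\}_m$ against the positive weights $w_m^{(k)} := \frac{1}{k}\mu_m(T)^{1+1/k}$, which sum to $\frac{1}{k}\zeta_T(1+\frac1k)$.

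The second step is to define $L_\omega(\{c_m\}_m) := \frac{1}{\Tr_\omega(T)} \bl\left( \frac{1}{k}\sum_m \mu_m(T)^{1+1/k} c_m \right)$ for $\{c_m\} \in \ell^\infty$, and to check this is a generalised limit, i.e.\ a state of $\ell^\infty$ satisfying $(\ref{eq:genL})$. Positivity, linearity and the normalisation $L_\omega(1) = \Tr_\omega(T)/\Tr_\omega(T) = 1$ are immediate from the corresponding properties of $\bl$ together with positivity of the weights. By construction $\phi_\omega(A) = \Tr_\omega(AT)/\Tr_\omega(T) = L_\omega(\inprod{h_m}{Ah_m})$, which is the desired identity. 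What remains is the sandwich inequality $\liminf_m c_m \leq L_\omega(\{c_m\}) \leq \limsup_m c_m$ for $c_m \geq 0$: this is where the real work lies. The point is that the weights $w_m^{(k)}$ concentrate at $m \to \infty$ as $k \to \infty$ in an averaged sense — for any fixed $M$, $\frac{1}{k}\sum_{m \leq M} \mu_m(T)^{1+1/k} \to 0$ as $k \to \infty$ since the sum has boundedly many terms each tending to a finite limit divided by $k$ — so the asymptotic behaviour of $c_m$ dominates. One then uses that $\bl$ is a generalised limit on $\ell^\infty$ (it satisfies $(\ref{eq:genL})$, being translation invariant, hence in $DL_2$ after pushing through $\mathcal{L}$, or directly) to conclude that only the tail of $\{c_m\}$ contributes; combined with $\frac1k\zeta_T(1+\frac1k)$ having $\bl$-value $\Tr_\omega(T) > 0$ bounded away from $0$, the normalisation forces $L_\omega(\{c_m\})$ between the liminf and limsup of $c_m$.

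The main obstacle I anticipate is precisely this last tail-concentration argument: one must show that replacing $\{c_m\}$ by $\{c_m\}$ with its first $M$ entries deleted changes the $\bl$-value of $\frac1k\sum_m \mu_m(T)^{1+1/k} c_m$ by an amount that $\to 0$ in the appropriate sense, and that the cut-off sequence $\{\mu_m(T)\chi_{m > M}\}$ still has $\Gamma$-type partial sums with the same $\bl$-average as $\{\mu_m(T)\}$ — in other words, that finite-rank perturbations of $T$ do not affect $\Tr_\omega$, which is exactly the vanishing of $\Tr_\omega$ on $\mathcal{L}_0^{1,\infty}$ recorded in Section \ref{sec:prelim}. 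Assembling these: given $\varepsilon > 0$ choose $M$ so that $c_m \leq \limsup_m c_m + \varepsilon$ for $m > M$; split the sum at $M$; the tail is $\bl$-dominated by $(\limsup_m c_m + \varepsilon)\,\frac1k\sum_{m>M}\mu_m(T)^{1+1/k}$ whose $\bl$-value is $(\limsup_m c_m+\varepsilon)\Tr_\omega(T)$ by finite-rank invariance, while the head contributes $\bl$-value $0$; dividing by $\Tr_\omega(T)$ and letting $\varepsilon \to 0$ gives the upper bound, and the lower bound is symmetric. A minor technical point worth flagging is that one should verify the interchange of $\Tr$ with the eigenbasis sum for $\zeta_{A,T}$, but this is routine for $s>1$ since the series of operators converges in trace norm.
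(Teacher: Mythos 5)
Your proposal is correct, and its mathematical core is the same as the paper's: both rest on Corollary \ref{cor:resA} combined with the eigenbasis expansion $\frac 1k \zeta_{A,T}(1+\frac 1k) = \frac 1k \sum_m \mu_m(T)^{1+1/k}\inprod{h_m}{Ah_m}$, and both ultimately derive the generalised-limit property from the vanishing of $\Tr_\omega$ on finite-rank operators. The packaging differs. The paper introduces the isometric unital map $\theta:\ell^\infty\to B(H)$, $\theta(\{a_k\})=\sum_k a_k P_k$ with $P_k$ the projection onto $h_k$, and defines $L_\omega:=\phi_\omega\circ\theta$; the state property is then inherited from $\phi_\omega$, and the sandwich inequality (\ref{eq:genL}) follows from a short abstract lemma (a state of $\ell^\infty$ is a generalised limit iff it vanishes on finite sequences), the vanishing being immediate because $\theta(\{a_k\}_{k=1}^{N-1})T$ is finite rank. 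You instead define $L_\omega$ directly on sequences by the explicit weight formula $L_\omega(\{c_m\}) = \Tr_\omega(T)^{-1}\,\bl\bigl(\frac 1k\sum_m \mu_m(T)^{1+1/k}c_m\bigr)$ and prove the sandwich bound by hand via the head/tail cut-off; that argument is sound (the head is $O(M/k)$ and so is annihilated by $\bl$, while the tail weights $\bl$-sum to $\Tr_\omega(T)$ by finite-rank invariance of the Dixmier trace --- one can apply Corollary \ref{cor:resA} to the projection $I-\sum_{m\leq M}P_m$ to stay within the stated hypotheses), but it is essentially the paper's Lemma \ref{lemma:GL} unrolled and specialised to your functional. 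Your route avoids having to verify that $\theta(\{a_k\})$ is a bounded operator, at the cost of redoing the positivity and normalisation bookkeeping on the sequence side; the two definitions of $L_\omega$ agree, since applying Corollary \ref{cor:resA} with $A=\theta(\{c_m\})$ identifies $\phi_\omega(\theta(\{c_m\}))$ with your explicit expression. The identity $\phi_\omega(A)=L_\omega(\inprod{h_m}{Ah_m})$ is then, in both treatments, the same two-line computation with the zeta function.
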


The proof is not overly technical.  We provide it here.
Take $\{h_m \}_{m=1}^\infty$ a complete orthonormal system of eigenvectors for $T$ such that $T h_m = \lambda_m h_m$.
Let $P_m$, $m \in \NN$, denote the one dimensional projections
onto $h_m$.  Define the map $\theta : \ell^\infty \to B(H)$ by
\begin{equation} \label{eq:gammamap}
\theta \left( \{ a_k\}_{k=1}^\infty \right) = \sum_{k=1}^\infty a_k P_k .
\end{equation}

\begin{lemma}
The map $\theta : \ell^\infty \to B(H)$ is an isometric injection
such that $\theta( \mathbf{1} ) = I$. Here $I$ is the identity of $B(H)$.
\end{lemma}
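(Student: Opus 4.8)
The plan is to read the series $\sum_{k=1}^\infty a_k P_k$ as a strong-operator limit of its partial sums, and then to check boundedness, the isometry property, and $\theta(\mathbf 1)=I$ in turn.

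First I would fix $\xi \in H$ and use that the $P_k$ are the orthogonal projections onto the orthonormal vectors $h_k$, so $P_k\xi = \inprod{h_k}{\xi}\,h_k$ and the vectors $P_k\xi$ are pairwise orthogonal. Then for $M<N$,
\[
\nm{\sum_{k=M+1}^N a_k P_k \xi}^2 = \sum_{k=M+1}^N |a_k|^2\,|\inprod{h_k}{\xi}|^2 \leq \nm{\{a_k\}}_\infty^2 \sum_{k=M+1}^N |\inprod{h_k}{\xi}|^2 ,
\]
and the last sum tends to $0$ as $M,N\to\infty$ by Bessel's inequality (completeness of $\{h_k\}$ is not needed here, only square-summability of the Fourier coefficients). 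Hence the partial sums $\sum_{k=1}^N a_k P_k\xi$ are Cauchy in $H$, and I would define $\theta(\{a_k\})\xi$ to be their limit. Letting $N\to\infty$ with $M=0$ in the displayed estimate gives $\nm{\theta(\{a_k\})\xi}\le \nm{\{a_k\}}_\infty\,\nm{\xi}$, so $\theta(\{a_k\})\in B(H)$ with $\nm{\theta(\{a_k\})}\le \nm{\{a_k\}}_\infty$; linearity of $\theta$ in $\{a_k\}$ is immediate from this pointwise description.

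For the reverse inequality and injectivity I would simply test on the eigenvectors: $\theta(\{a_k\})h_j = a_j h_j$, so $\nm{\theta(\{a_k\})}\ge |a_j|$ for every $j$, whence $\nm{\theta(\{a_k\})}\ge \sup_j|a_j| = \nm{\{a_k\}}_\infty$. Combined with the upper bound this shows $\theta$ is isometric, and an isometric linear map is automatically injective. Finally $\theta(\mathbf 1)=\sum_k P_k$ (strong limit), which equals $I$ precisely because $\{h_k\}_{k=1}^\infty$ is a \emph{complete} orthonormal system — this is the only place completeness is used.

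There is no serious obstacle here; the one point requiring discipline is that the series defining $\theta(\{a_k\})$ converges only in the strong operator topology, not in norm in general (for $\{a_k\}=\mathbf 1$ the partial sums are finite-rank projections, staying at norm distance $1$ from $I$), so every estimate must be organised pointwise on vectors $\xi\in H$ rather than at the level of operators.
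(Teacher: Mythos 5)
Your proof is correct and follows essentially the same route as the paper's: the upper bound $\nm{\theta(\mathbf{a})h}\le\nm{\mathbf{a}}_\infty\nm{h}$ from orthogonality of the $P_k h$, the reverse inequality by testing on the eigenvectors $h_j$, and completeness of $\{h_k\}$ only for $\theta(\mathbf{1})=I$. Your added care in establishing strong-operator convergence of the partial sums via Bessel's inequality is a detail the paper leaves implicit, but it does not change the argument.
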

\begin{proof}
Set $\mathbf{a} = \{ a_k\}_{k=1}^\infty$.
Then $\theta(\mathbf{a})h
= \sum_{m=1}^\infty a_m \inprod{h_m}{h} h_m$
and $\nm{\theta(\mathbf{a})h}^2 
= \sum_{m=1}^\infty |a_m\inprod{h_m}{h}|^2
\leq \nm{\mathbf{a}}_\infty^2 
\sum_{m=1}^\infty |\inprod{h_m}{h}|^2
= \nm{\mathbf{a}}_\infty^2 \nm{h}^2$.
So $\theta(\mathbf{a})$ is a bounded linear operator with $\nm{\theta(\mathbf{a})} \leq \nm{\mathbf{a}}_\infty$.
Conversely $\theta(\mathbf{a})h_m = a_m h_m$ with
$\nm{\theta(\mathbf{a})h_m} = |a_m|$.  So
$\nm{\theta(\mathbf{a})} :=
\sup_{\| h \| \leq 1} \nm{\theta(\mathbf{a})h}
\geq \sup_{m} |a_m| =: \nm{\mathbf{a}}_\infty$.  This shows
$\theta$ is an isometry.  Finally, if $\theta(\mathbf{a}) = 0$,
$\theta(\mathbf{a})h_m = 0$ and hence $a_m = 0 \fa m \in \NN$.
Thus $\theta$ is injective.  It is evident
$\theta \left( \mathbf{1} \right) = \sum_{k=1}^\infty P_k = I$.
\end{proof}

Let $\omega \in DL_2$.  Define the linear functional $L_\omega : \ell^\infty \to \CC$ by
\begin{equation} \label{eq:Lmap}
L_\omega \left( \{ a_k\}_{k=1}^\infty \right) = \phi_\omega \left(
\theta( \{ a_k\}_{k=1}^\infty) \right) .
\end{equation}

\begin{lemma} \label{lemma:GL}
A state on $\ell^\infty$ vanishes on finite sequences if and only
if it is a generalised limit.
\end{lemma}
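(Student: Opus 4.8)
The plan is to prove both implications directly from the two defining features of a state $\omega$ on the unital $C^*$-algebra $\ell^\infty$: positivity (hence monotonicity on real sequences) together with normalisation $\omega(\mathbf 1)=1$ (hence $\nm{\omega}=1$). Throughout, write $\chi_N$ for the indicator sequence of $\{1,\dots,N\}\subset\NN$, so that $\chi_N$ and $\mathbf 1-\chi_N$ are complementary projections in $\ell^\infty$ and $\mathbf a\chi_N$ is a finite sequence whenever $\mathbf a\in\ell^\infty$.

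For the direction ``generalised limit $\Rightarrow$ vanishes on finite sequences'': given a finitely supported sequence $\mathbf a$, decompose it as $\mathbf a=(\mathbf b_1-\mathbf b_2)+i(\mathbf b_3-\mathbf b_4)$ with each $\mathbf b_j$ a nonnegative finitely supported sequence. Since $\limsup_k(\mathbf b_j)_k=\liminf_k(\mathbf b_j)_k=0$, property (\ref{eq:genL}) forces $\omega(\mathbf b_j)=0$, and linearity gives $\omega(\mathbf a)=0$. (This is just the remark following (\ref{eq:genL}) that a generalised limit extends $\lim$ from $c$, specialised to $c_{00}\subset c_0$.)

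For the converse, suppose $\omega$ is a state vanishing on finite sequences and let $0\le\mathbf a=\{a_k\}\in\ell^\infty$. For every $N$ both $\chi_N$ and $\mathbf a\chi_N$ are finite sequences, so $\omega(\chi_N)=0$ and $\omega(\mathbf a\chi_N)=0$, whence $\omega(\mathbf 1-\chi_N)=1$ and $\omega(\mathbf a)=\omega\!\big(\mathbf a(\mathbf 1-\chi_N)\big)$. Fix $\varepsilon>0$. Choosing $N$ with $a_k\le\limsup_k a_k+\varepsilon$ for $k>N$, the coordinatewise inequality $0\le\mathbf a(\mathbf 1-\chi_N)\le(\limsup_k a_k+\varepsilon)(\mathbf 1-\chi_N)$ and monotonicity of $\omega$ give $\omega(\mathbf a)\le\limsup_k a_k+\varepsilon$. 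Choosing instead $N$ with $a_k\ge\liminf_k a_k-\varepsilon$ for $k>N$, positivity of $\omega$ applied to the coordinatewise-nonnegative sequence $\mathbf a(\mathbf 1-\chi_N)-(\liminf_k a_k-\varepsilon)(\mathbf 1-\chi_N)$ gives $\omega(\mathbf a)\ge\liminf_k a_k-\varepsilon$. Letting $\varepsilon\to 0$ yields (\ref{eq:genL}), so $\omega$ is a generalised limit.

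The argument is mostly bookkeeping; the two points that need a little care are that a state of $\ell^\infty$ is automatically positive and of norm one (so the truncated sandwiching constants are genuinely controlled by $\omega(\mathbf 1)=1$), and that the lower estimate must be run through positivity of $\omega$ on the nonnegative difference rather than by any ``division'', since $\liminf_k a_k-\varepsilon$ may be negative. Reducing a general $\mathbf a$ to nonnegative real sequences, and to its tail via vanishing on $c_{00}$, is routine.
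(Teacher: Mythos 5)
Your proof is correct and follows essentially the same route as the paper: the forward direction via (\ref{eq:genL}) applied to finitely supported nonnegative pieces, and the converse by discarding the head $\mathbf a\chi_N$ (which $\omega$ kills), sandwiching the tail between constant multiples of the tail indicator using positivity, and letting $N\to\infty$. The paper phrases the sandwich with $\inf_{k\ge N}a_k$ and $\sup_{k\ge N}a_k$ directly rather than with your $\varepsilon$, but this is the identical argument.
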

\begin{proof}
The if direction is evident.  Let $L$ be a state that vanishes
on finite sequences.
Let $\{a_k \}_{k=1}^\infty \geq 0$.
Set $e_N := (0,\ldots,0,1,\ldots)$ and $\alpha_N :=
(0,\ldots,0,a_N,a_{N+1},\ldots)$ where there are $N-1$ zeros.
Then $L(e_N) = L(1) = 1$ and $L(\alpha_N) = L(\{ a_k \}_{k=1}^\infty)$.
By positivity of $L$, 
$\inf_{k \geq N} a_k L(e_N) \leq L(\alpha_N) \leq \sup_{k \geq N} a_k L(e_N)$.  Hence $\inf_{k \geq N} a_k \leq L(\{a_k\}_{k=1}^\infty) \leq \sup_{k \geq N} a_k$
It follows $\liminf_{k} a_k \leq L(\{a_k\}_{k=1}^\infty) \leq \limsup_{k} a_k$ by taking $N \to \infty$.
\end{proof}

\begin{prop}
The map $L_\omega : \ell^\infty \to \CC$ is a generalised{}
limit.
\end{prop}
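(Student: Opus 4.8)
The plan is to recognise $L_\omega$ as a state on $\ell^\infty$ and then appeal to Lemma~\ref{lemma:GL}, which reduces the claim to the single fact that $L_\omega$ annihilates every finitely supported sequence. So the argument will consist of two short steps plus the invocation of that lemma.

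For the first step I would note that, by the preceding lemma, $\theta \colon \ell^\infty \to B(H)$ is a unital $*$-homomorphism (multiplicativity being immediate from $P_jP_k=\delta_{jk}P_k$), hence in particular positive, with $\theta(\mathbf 1)=I$. Since $\phi_\omega$ is a state of $B(H)$, the composition $L_\omega=\phi_\omega\circ\theta$ is positive and satisfies $L_\omega(\mathbf 1)=\phi_\omega(I)=1$, so $L_\omega$ is a state on $\ell^\infty$. One should keep in mind here that $\phi_\omega$ is well defined only because $T$ is non-trivial, i.e.~$\Tr_\omega(T)>0$, which is part of the standing hypotheses, and that $\Tr_\omega(AT)$ makes sense for every $A\in B(H)$ because $\mathcal{L}^{1,\infty}$ is an ideal.

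For the second step, fix a finitely supported sequence $\mathbf a=(a_1,\dots,a_N,0,0,\dots)$. Then $\theta(\mathbf a)=\sum_{k=1}^N a_kP_k$ has finite rank, so $\theta(\mathbf a)T$ is a finite rank operator and therefore lies in the separable part $\mathcal{L}_0^{1,\infty}$. Since $\Tr_\omega$ (for $\omega\in DL_2$) is a finite trace on $\mathcal{L}^{1,\infty}$ that vanishes on $\mathcal{L}_0^{1,\infty}$, as recorded in Section~\ref{sec:prelim}, one obtains $\Tr_\omega(\theta(\mathbf a)T)=0$ and hence
$$
L_\omega(\mathbf a)=\phi_\omega(\theta(\mathbf a))=\frac{\Tr_\omega(\theta(\mathbf a)T)}{\Tr_\omega(T)}=0 .
$$
Lemma~\ref{lemma:GL} applied to the state $L_\omega$ then yields that $L_\omega$ is a generalised limit.

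There is no genuine obstacle in this proof: the only point that needs care is the identification of $\theta(\mathbf a)T$ with an element of $\mathcal{L}_0^{1,\infty}$ when $\mathbf a$ is finitely supported, which is immediate from finite rankness, combined with the already-established vanishing of $\Tr_\omega$ on that ideal. Everything else is routine bookkeeping about compositions of positive unital maps.
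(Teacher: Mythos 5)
Your proof is correct and follows essentially the same route as the paper: establish that $L_\omega$ is a state, observe that $\theta(\mathbf a)T$ is finite rank for finitely supported $\mathbf a$ so that $\Tr_\omega$ annihilates it, and then invoke Lemma \ref{lemma:GL}. The only cosmetic difference is that you justify positivity of $L_\omega$ via $\theta$ being a unital $*$-homomorphism, whereas the paper simply records positivity as evident.
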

\begin{proof}
Without loss $\Tr_\omega(T) =1$.  It is evident $L:=L_\omega$ is positive and $L(\mathbf{1}) = 1$.  Hence $L$ is a state of $\ell^\infty$. 
Suppose $\{a_k\}_{k=1}^\infty \geq 0$.  For $N \geq 2$, $\theta(\{a_k\}_{k=1}^{N-1})T$ is finite rank and
$L(\{a_k\}_{k=1}^{N-1}) = \Tr_\omega(\theta(\{a_k\}_{k=1}^{N-1})T) = 0$. Apply the previous lemma.
\end{proof}

\subsubsection{Proof of Theorem \ref{thm:structure}}

Without loss $\Tr_\omega(T)=1$. Let $\bl \in BL \cap DL$. Using Corollary \ref{cor:resA},
\begin{eqnarray}
\phi_{\mathcal{L}(\bl)}(A)
& = & \bl( p^{-1} \Tr(AT^{1+p^{-1}}) ) \nonumber \\
& = & \bl( p^{-1} \sum_{m=1}^\infty \inprod{h_m}{AT^{1+p^{-1}}h_m}) \nonumber \\
& = & \bl( p^{-1} \sum_{m=1}^\infty \lambda_m^{1+p^{-1}} \inprod{h_m}{Ah_m}) . \label{eq:structureA}
\end{eqnarray}
Conversely, from (\ref{eq:Lmap}), 
$L_{\mathcal{L}(\bl)}(\{ \inprod{h_k}{Ah_k} \}_{k=1}^\infty) = \phi_{\mathcal{L}(\bl)}(\theta(\{ \inprod{h_k}{Ah_k} \}_{k=1}^\infty))$, and
\begin{eqnarray}
\phi_{\mathcal{L}(\bl)}(\theta(\{ \inprod{h_k}{Ah_k} \}_{k=1}^\infty))
& = & \bl( p^{-1} \Tr(\sum_{k=1}^\infty \inprod{h_k}{Ah_k} P_k T^{1+p^{-1}}) ) \nonumber \\
& = & \bl( p^{-1} \sum_{m=1}^\infty \inprod{h_m}{\sum_{k=1}^\infty \inprod{h_k}{Ah_k} P_k T^{1+p^{-1}}h_m}) \nonumber \\
& = & \bl( p^{-1} \sum_{m=1}^\infty \lambda_m^{1+p^{-1}} \inprod{h_m}{Ah_m}) . \label{eq:structureB}
\end{eqnarray}
Comparing (\ref{eq:structureA}) and (\ref{eq:structureB})
yields the result. \qed

\begin{ex} \label{example:sec3_1}
\textbf{1.}~Consider the Laplacian $\Delta = - d^2/d\theta^2$ on the flat 1-torus $\TT$.
From Theorem \ref{thm:structure}, $\phi_\omega(A) = L_\omega(\inprod{f_m}{Af_m})$
where $f_m(\theta) = e^{im\theta}$, $m \in \ZZ \cong \{0,1,-1,2,-2,\ldots\}$, for any $A \in B(L^2(\TT))$.
If $M_f$ is the multiplier of $f \in L^\infty(\TT)$ on $L^2(\TT)$,
$\inprod{f_m}{M_f f_m} = \int_{-\pi}^\pi f(\theta) d\theta$, and
$\Tr_\omega(M_f\Delta^{-1/2}) = 2 \phi_\omega(M_f) = 2 \int_{-\pi}^\pi f(\theta) d\theta$.  See \cite{LDS} for the equivalent statement for any (closed) compact Riemannian manifold.

\textbf{2.}~Consider two unitaries $u,v$ such that $uv = \lambda vu$,
for $\lambda := e^{2\pi i \theta} \in \SB$ (the unit circle). Denote by
$F_\theta(u,v)$ the $^*$-algebra of linear combinations
$\sum_{(m,n) \in J} a_{m,n} u^mv^n$, $J \subset \ZZ^2$ is a finite set, with product $ab = \sum_{r,s}( \sum_{m,n} a_{r-m,n} \lambda^{mn} b_{m,s-n} ) u^rv^s$
and involution $a^* = \sum_{r,s} (\lambda^{rs} \overline{a}_{-r,-s}) u^rv^s$, $a,b \in F_\theta(u,v)$. The assignment $\tau_0(a) = a_{0,0}$ is a faithful trace on $F_\theta(u,v)$.  Let $(H_\theta,\pi_\theta)$ denote the cyclic representation associated to $\tau_0$.
The closure, $C_\theta(u,v)$, of $\pi_\theta(F_\theta(u,v))$ in the operator norm is called a rotation C$^*$-algebra, \cite{Rieff1},
or the noncommutative torus ($\lambda \not=1$), (\cite{CN}, \S III.2.$\beta$ IV.6.$\alpha$  VI.3.c) (\cite{C4}, p.~166), (\cite{GBVF}, \S 12.2).   Canonically, finite linear combinations of
$u^mv^n \hookrightarrow H_\theta$ are dense in $H_\theta$.
Define $\Delta_\theta(u^mv^n) = (m^2 + n^2)u^mv^n$.
It can be shown that the `noncommutative laplacian' $\Delta_\theta$ has a unique positive extension (also denoted $\Delta_\theta$)
$\Delta_\theta : \Dom(\Delta_\theta) \to H_\theta$
with compact resolvent, see \textit{op.~cit.}.  The eigenvectors $h_{m,n} = u^mv^n \in H_\theta$ form a complete orthonormal system.
Note that
\begin{eqnarray*}
\inprod{h_{m,n}}{\pi_\theta(a)h_{m,n}} & = &
\sum_{k,l} \lambda^{-kl}\delta_{m,n}(k,-l) \lambda^{kl} \sum_{r,s} a_{k-r,s} \lambda^{rs} \delta_{m,n}(r,-l-s) \\
& = &  \sum_{k,l} \delta_{m,n}(k,l) a_{k-m,l-n} \lambda^{m(l-n)} = a_{0,0}
\end{eqnarray*}
for any $(m,n) \in \ZZ^2$.
Using the Cantor enumeration of $\ZZ^2$, it follows from Theorem
\ref{thm:structure} that
$
\Tr_\omega(\pi_\theta(a)\Delta_\theta^{-1}) = \pi \inprod{h_{0,0}}{\pi_\theta(a)h_{0,0}} = \pi \tau_0(a)
$, $\! \! \fa a \in F_\theta(u,v)$ ($\Tr_\omega(\Delta_\theta^{-1})=\pi$).  By uniform continuity
the same result follows for $C_\theta(u,v)$.
Thus Theorem \ref{thm:structure} provides a short proof of the known facts that $A\Delta_\theta^{-1}$, $A \in C_\theta(u,v)$, is measurable (in the sense of Connes) and $\Tr_\omega( \cdot \Delta_\theta^{-1})$
is a faithful trace on $C_\theta(u,v)$.
\end{ex}

\subsection{Conditions for normality of the noncommutative integral}

Let $\mathcal{M}$ be a weakly closed $^*$-subalgebra of $B(H)$.

\begin{dfn} \label{ref:dfn_Mdom}
A positive compact operator
$T$ is $(\mathcal{M},h)$-dominated if, for some complete orthonormal system $\{h_m\}_{m=1}^\infty$ of eigenvectors of $T$,
there exists $h \in H$ such that $\nm{Ph_m} \leq \nm{Ph}$ for all projections $P \in \mathcal{M}$.
\end{dfn}

\begin{thm} \label{thm:normM}
Let $0 < T \in \mathcal{L}^{1,\infty}$ be $(\mathcal{M},h)$-dominated.  Then $\Tr_\omega( \cdot T) \in \mathcal{M}_*$ for all $\omega \in \mathcal{L}(BL \cap DL)$.
\end{thm}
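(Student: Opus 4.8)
The plan is to feed the structure theorem (Theorem~\ref{thm:structure}) into an elementary squeeze argument: the point is that $(\mathcal{M},h)$-domination is precisely what forces $\Tr_\omega(\,\cdot\,T)$ to sit beneath a fixed \emph{normal} functional on $\mathcal{M}$. Fix $\omega \in \mathcal{L}(BL \cap DL)$ and write $\psi := \Tr_\omega(\,\cdot\,T)$, a positive linear functional on $\mathcal{M}$. If $\Tr_\omega(T) = 0$ then $0 \le \psi(A) = \Tr_\omega(AT) = \Tr_\omega(T^{1/2}AT^{1/2}) \le \nm{A}\,\Tr_\omega(T) = 0$ for $0 \le A \in \mathcal{M}$ (cyclicity of $\Tr_\omega$, and $0 \le T^{1/2}AT^{1/2} \le \nm{A}T$), so $\psi \equiv 0$ is trivially normal; hence assume $\Tr_\omega(T) > 0$. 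First I would fix the complete orthonormal system $\{h_m\}_{m=1}^\infty$ of eigenvectors of $T$ and the vector $h$ supplied by Definition~\ref{ref:dfn_Mdom}, so that $\nm{Ph_m} \le \nm{Ph}$ for every projection $P \in \mathcal{M}$ and every $m$. Applying Theorem~\ref{thm:structure} \emph{with this particular orthonormal system}, there is a generalised limit $L_\omega$ on $\ell^\infty$ with
\[
\Tr_\omega(AT) \;=\; \Tr_\omega(T)\;L_\omega\!\left( \{ \inprod{h_m}{A h_m} \}_{m=1}^\infty \right), \qquad A \in B(H).
\]

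The key step is to promote domination of vectors to domination of functionals. For each $m$ let $\rho_m$ be the restriction to $\mathcal{M}$ of the vector state $A \mapsto \inprod{h_m}{A h_m}$, and set $\rho_h(A) := \inprod{h}{A h}$ on $\mathcal{M}$; these are positive normal functionals, and the hypothesis says exactly $\rho_m(P) \le \rho_h(P)$ for every projection $P \in \mathcal{M}$. This inequality extends from the projection lattice of $\mathcal{M}$ to all of $\mathcal{M}_+$: for $0 \le A \in \mathcal{M}$ approximate $A$ in operator norm by finite positive combinations $\sum_j t_j F_j$ of pairwise orthogonal spectral projections $F_j$ of $A$ (which lie in $\mathcal{M}$), note $\rho_m(\sum_j t_j F_j) \le \rho_h(\sum_j t_j F_j)$ by linearity and $t_j \ge 0$, and pass to the limit using norm-continuity of $\rho_m,\rho_h$. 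Since a generalised limit is monotone and fixes constant sequences, applying $L_\omega$ to $0 \le \rho_m(A) \le \rho_h(A)$ — whose right-hand side is constant in $m$ — and multiplying by $\Tr_\omega(T) \ge 0$ gives
\[
0 \;\le\; \psi(A) \;=\; \Tr_\omega(AT) \;\le\; \Tr_\omega(T)\,\inprod{h}{A h} \,, \qquad 0 \le A \in \mathcal{M}.
\]

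To conclude, set $\chi := \Tr_\omega(T)\,\rho_h$, a positive normal functional on $\mathcal{M}$ (a scalar multiple of a vector functional), so that $0 \le \psi \le \chi$ on $\mathcal{M}_+$. For any bounded increasing net $A_\alpha \nearrow A$ in $\mathcal{M}_+$, normality of $\chi$ gives $\chi(A) - \chi(A_\alpha) = \chi(A - A_\alpha) \to 0$, whence $0 \le \psi(A) - \psi(A_\alpha) = \psi(A - A_\alpha) \le \chi(A - A_\alpha) \to 0$, so $\psi(A_\alpha) \to \psi(A)$; this is precisely normality of $\psi$, i.e.~$\Tr_\omega(\,\cdot\,T) \in \mathcal{M}_*$. (Alternatively one checks complete additivity: for an orthogonal family of projections $\{P_i\} \subset \mathcal{M}$ with finite partial sums $Q_F$ and strong sum $P$, the projections $P - Q_F$ satisfy $0 \le \psi(P) - \psi(Q_F) = \psi(P - Q_F) \le \chi(P - Q_F) \to 0$, and a positive linear functional that is completely additive on the projections of a von Neumann algebra is normal.) There is no serious obstacle once Theorem~\ref{thm:structure} is in hand; the only non-formal ingredient is the extension of $\rho_m \le \rho_h$ from projections to $\mathcal{M}_+$, which is routine via the spectral theorem, and the one thing to observe is that Definition~\ref{ref:dfn_Mdom} is designed exactly so that the generalised-limit representation of $\Tr_\omega(\,\cdot\,T)$ from Theorem~\ref{thm:structure} is trapped below a single normal functional.
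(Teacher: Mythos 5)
Your proposal is correct and follows essentially the same route as the paper: extend the projection inequality $\inprod{h_m}{Ph_m}\le\inprod{h}{Ph}$ to all of $\mathcal{M}_+$ by norm-approximation with positive combinations of projections, apply the generalised limit from Theorem \ref{thm:structure} to trap $\Tr_\omega(\,\cdot\,T)$ under the normal vector functional $\Tr_\omega(T)\inprod{h}{\,\cdot\,h}$, and conclude normality from the squeeze $\psi(A-A_\alpha)\le\chi(A-A_\alpha)\to 0$ on bounded increasing nets. Your explicit treatment of the degenerate case $\Tr_\omega(T)=0$ and the complete-additivity alternative are harmless additions; the core argument is the paper's.
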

\begin{proof}
By hypothesis $\inprod{h_m}{Ph_m} \leq \inprod{h}{Ph}$ for all projections $P \in \mathcal{M}$.  Then $\inprod{h_m}{Ah_m} \leq \inprod{h}{Ah}$, $0 < A \in \mathcal{M}$, as $A$
is a uniform limit of finite linear positive spans of projections
(\cite{Ped}, \S 2.2.6 p.~23).  For any generalised limit
$L$ and $0 < A \in \mathcal{M}$,
\begin{equation} \label{eq:dom}
L(\inprod{h_m}{Ah_m})
\leq \limsup_{m \to \infty} \inprod{h_m}{Ah_m}
\leq \inprod{h}{Ah}.
\end{equation}
Let $\{ A_\alpha \}$ be a net of monotonically increasing positive elements of $\mathcal{M}$ with upper bound.  It follows that $\{ A_\alpha \}$ converges strongly
to a l.u.b.~$A \in \mathcal{M}$ (\cite{BR}, Lemma 2.4.19 p.~76).  From (\ref{eq:dom})
$
L(\inprod{h_m}{(A - A_\alpha)h_m})
\leq \inprod{h}{(A-A_\alpha)h}.
$
Since $\inprod{h}{(A-A_\alpha)h} \stackrel{\alpha}{\to} 0$,
$L(\inprod{h_m}{Ah_m}) = \sup_{\alpha} L(\inprod{h_m}{A_\alpha h_m})$.  From Theorem \ref{thm:structure}
$\phi_\omega(A) = \sup_\alpha \phi_\omega(A_\alpha)$
and $\phi_\omega$ is normal on $\mathcal{M}$ (\cite{Ped}, \S 3.6.1) (\cite{BR}, p.~76).  The result follows as $\Tr_\omega(\cdot T)
= \phi_\omega(\cdot) \Tr_\omega(T)$ (a scalar multiple of $\phi_\omega$).
\end{proof}

\begin{ex}
\textbf{1.}~Let $(F,\mu)$ be a $\sigma$-finite measure space.  Take $H=L^2(F,\mu)$
and $\mathcal{M} = L^\infty(F,\mu)$ acting by multipliers on $H$.  Let $T$ be any positive compact operator (or positive operator with compact resolvent) with
eigenfunctions $f_m$ satisfying $|f_m|^2 \leq g \in L^1(F,\mu)$ $\mu$-a.e..
Then $T$ is $(\mathcal{M},g)$-dominated.  For example, the eigenfunctions $f_m(\theta) = e^{im\theta}$ of
the Laplacian $\Delta$ on the 1-torus $\TT$ satisfy
$|f_m|^2 = 1 \in L^1(\TT)$.

\textbf{2.}~Let $\Delta_{\theta}$ be the `noncommutative laplacian' from Example \ref{example:sec3_1}.2.  From the example $\nm{Ph_{m,n}} = \nm{Ph_{0,0}}$ for all projections $P \in C_{\theta}(u,v)''$.  Hence $\Tr_\omega( \cdot \Delta_\theta^{-1})$ is a faithful normal trace
on $C_{\theta}(u,v)''$.
\end{ex}

\section{Technical Results} \label{sec:restech}

%
%

\subsection{Proof of Theorem \ref{thm:resPcont} and Corollary \ref{cor:resAcont}}

\begin{lemma} \label{prop:projcont}
Let $P$ be a projection and $0 < T \in \mathcal{L}^{1,\infty}$.
For any $\phi \in BL[0,\infty)$,
$$
\phi \left( \frac 1r \Tr(PT^{1+\frac 1r}P) \right)
= \phi \left( \frac 1r \Tr((PTP)^{1+\frac 1r}) \right).
$$
If either function is convergent at infinity $\phi$ can be replaced by $\lim$.
\end{lemma}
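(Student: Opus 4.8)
The plan is to prove the sharper statement that $\tfrac1r\bigl(\Tr(PT^{1+\frac1r}P)-\Tr((PTP)^{1+\frac1r})\bigr)\to0$ as $r\to\infty$, i.e.\ (with $s=1+\tfrac1r$) that $(s-1)\bigl(\Tr(PT^sP)-\Tr((PTP)^s)\bigr)\to0$ as $s\to1^+$. Granted this, the difference of the two functions of $r$ lies in $C_0([0,\infty))$ and is therefore annihilated by every $\phi\in BL[0,\infty)$ (property (i) of a continuous Banach limit); moreover, if one of the two functions converges at infinity then so does the other, to the same limit, so $\phi$ may be replaced by $\lim$. First I would note that by the Hansen--Pedersen--Jensen inequality for the operator convex function $x\mapsto x^s$ on $[0,\infty)$ ($1\le s\le2$, with $0^s=0$) and the contraction $P$ one has $0\le(PTP)^s\le PT^sP$, so that $\Tr(PT^sP)-\Tr((PTP)^s)=\|PT^sP-(PTP)^s\|_1\ge0$; it then suffices to show that this $\mathcal L^1$-norm remains bounded as $s\to1^+$.

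To estimate $\|PT^sP-(PTP)^s\|_1$ I would use, for $\epsilon\in(0,1)$ and $A\ge0$, the representation $A^{1+\epsilon}=\tfrac{\sin\pi\epsilon}{\pi}\int_0^\infty\lambda^{\epsilon-1}A^2(A+\lambda)^{-1}\,d\lambda$. Applying it to $A=T$ and compressing by $P$, and to $A=PTP$, gives
\[
PT^{1+\epsilon}P-(PTP)^{1+\epsilon}=\frac{\sin\pi\epsilon}{\pi}\int_0^\infty\lambda^{\epsilon-1}D_\lambda\,d\lambda,\qquad D_\lambda:=PT^2(T+\lambda)^{-1}P-(PTP)^2(PTP+\lambda)^{-1}.
\]
Using the identity $A^2(A+\lambda)^{-1}=A-\lambda+\lambda^2(A+\lambda)^{-1}$ one checks $D_\lambda=\lambda^2\,P\bigl((T+\lambda)^{-1}-(PTP+\lambda)^{-1}\bigr)P$, that is, $\lambda^2$ times the compressed resolvent of $T$ minus the resolvent of the compression of $T$; since the inverse of a compressed positive operator is dominated by the compression of its inverse, $D_\lambda\ge0$, and hence the $\mathcal L^1$-norm passes under the integral:
\[
\bigl\|PT^{1+\epsilon}P-(PTP)^{1+\epsilon}\bigr\|_1=\frac{\sin\pi\epsilon}{\pi}\int_0^\infty\lambda^{\epsilon-1}\,\Tr(D_\lambda)\,d\lambda.
\]

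It remains to control $\Tr(D_\lambda)$ at the two ends. For $\lambda\to\infty$, Neumann expansion of both resolvents gives $D_\lambda=\lambda^{-1}\bigl(PT^2P-(PTP)^2\bigr)+O(\lambda^{-2})$ in $\mathcal L^1$, and $PT^2P-(PTP)^2=PT(I-P)TP=\bigl((I-P)TP\bigr)^*\bigl((I-P)TP\bigr)\in\mathcal L^1$ with norm $\le\|T\|_2^2<\infty$ (here $T\in\mathcal L^{1,\infty}$ forces $\mu_n(T)=O\bigl(\log(1+n)/n\bigr)$, hence $T\in\mathcal L^2$), so $\Tr(D_\lambda)=O(\lambda^{-1})$. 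For $\lambda\to0^+$ one has $D_0=0$; writing $D_\lambda=\lambda^2X_\lambda^*Y_\lambda X_\lambda$ with $X_\lambda=(I-P)(T+\lambda)^{-1}P=-\lambda^{-1}(I-P)T(T+\lambda)^{-1}P$ and $Y_\lambda=\bigl((I-P)(T+\lambda)^{-1}(I-P)\bigr)^{-1}$ (the Schur complement of $P(T+\lambda)P$, which satisfies $\lambda(I-P)\le Y_\lambda\le(I-P)T(I-P)+\lambda(I-P)$), one combines these bounds with the singular value asymptotics of $T$: the decisive point is that $Y_\lambda$ is small precisely on the range of the small eigenvalues of $T$, which is where $X_\lambda$ carries most of its $\mathcal L^2$-mass, so this cancellation keeps $\Tr(D_\lambda)$ bounded (slowly growing at worst) as $\lambda\to0^+$ rather than of size $\lambda^{-1}$ like each of $\Tr(PT^2(T+\lambda)^{-1}P)$ and $\Tr((PTP)^2(PTP+\lambda)^{-1})$ separately. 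With $\Tr(D_\lambda)$ bounded near $0$ and $O(\lambda^{-1})$ near $\infty$, $\int_0^\infty\lambda^{\epsilon-1}\Tr(D_\lambda)\,d\lambda=O(1/\epsilon)$, hence $\|PT^{1+\epsilon}P-(PTP)^{1+\epsilon}\|_1=\tfrac{\sin\pi\epsilon}{\pi}\cdot O(1/\epsilon)=O(1)$, which completes the reduction.

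The main obstacle is exactly the estimate of $\Tr(D_\lambda)$ as $\lambda\to0^+$. When $PTP\in\mathcal L^1$ it is immediate, since then $\Tr(PT^2(T+\lambda)^{-1}P)$ and $\Tr((PTP)^2(PTP+\lambda)^{-1})$ both increase to $\Tr(PTP)$ by monotone convergence; but in the genuine case $PTP\notin\mathcal L^1$ these two traces are individually divergent as $\lambda\to0^+$, and one must extract the cancellation between them. This is precisely where the $\mathcal L^{1,\infty}$-decay of $\mu_n(T)$ and the Schur-complement structure of $D_\lambda$ are indispensable, and it is the only step that is not routine.
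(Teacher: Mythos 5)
Your reduction is fine as far as it goes: the Hansen--Pedersen--Jensen inequality gives $(PTP)^s\le PT^sP$, the integral representation and the identity $A^2(A+\lambda)^{-1}=A-\lambda+\lambda^2(A+\lambda)^{-1}$ correctly isolate $D_\lambda=\lambda^2\bigl(P(T+\lambda)^{-1}P-(PTP+\lambda)^{-1}\bigr)\ge0$ (interpreted on $PH$), and the large-$\lambda$ bound $\Tr(D_\lambda)=O(\lambda^{-1})$ via $PT^2P-(PTP)^2=PT(I-P)TP\in\mathcal L^1$ is correct. But the argument has a genuine gap exactly where you flag it: the claim that $\Tr(D_\lambda)$ stays bounded (or even $o(\log(1/\lambda))$, which is what the $O(1/\epsilon)$ bound on the integral actually requires) as $\lambda\to0^+$ is never proved. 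Each of the two traces making up $\Tr(D_\lambda)$ individually diverges like $\log(1/\lambda)$ when $PTP\notin\mathcal L^1$, so everything hinges on a quantitative cancellation; the sentence about $Y_\lambda$ being ``small precisely on the range of the small eigenvalues of $T$, which is where $X_\lambda$ carries most of its $\mathcal L^2$-mass'' is a heuristic, not an estimate, and it is not clear how to convert it into one (the Schur-complement factorisation merely reformulates the positivity of $D_\lambda$, it does not by itself control the trace). Since this is the entire analytic content of the lemma --- you concede it is ``the only step that is not routine'' --- the proof is incomplete, and you are in fact attempting to prove a stronger statement ($\|PT^sP-(PTP)^s\|_1=O(1)$) than the lemma needs ($o(1/(s-1))$ for the difference of traces would suffice).

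For comparison, the paper does not prove any of this from scratch: it observes that $\Tr(PT^{1+1/r}P)=\Tr(PT^{1+1/r})$ and $(\sqrt P T\sqrt P)^{1+1/r}=(PTP)^{1+1/r}$ because $\sqrt P=P$, and then invokes (\cite{CPS}, Prop 3.6), which asserts precisely that $\phi\bigl(\tfrac1r\Tr(AT^{1+1/r}A)\bigr)=\phi\bigl(\tfrac1r\Tr((ATA)^{1+1/r})\bigr)$ for $\phi\in BL[0,\infty)$, with $\phi$ replaceable by $\lim$ when either side converges. If you want a self-contained argument you would need to supply the missing small-$\lambda$ estimate or, more realistically, reproduce the argument of (\cite{CPS}, Prop 3.6).
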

\begin{proof}
By (\cite{CPS}, Prop 3.6), $\phi \left( 1/r \Tr(PT^{1+1/r}) \right)
= \phi \left( 1/r \Tr((\sqrt{P}T\sqrt{P})^{1+1/r}) \right)$
for $\phi \in BL[0,\infty)$, and, if either function is convergent at infinity, $\phi$ can be replaced by $\lim$.
Clearly $1/r \Tr(PT^{1+1/r}P) = 1/r \Tr(PT^{1+1/r})$
 and $1/r \Tr((\sqrt{P}T\sqrt{P})^{1+1/r})
 = 1/r \Tr((PTP)^{1+1/r})$.
\end{proof}

\subsubsection{Proof of Theorem \ref{thm:resPcont}}

From (\cite{CRSS}, Thm 4.11)
$\tr_{L(\phi)}(V) = \phi \left( 1/r \Tr(V^{1+1/r}) \right)$
where $0 < V = PTP \in \mathcal{L}^{1,\infty}$.  An application of Lemma \ref{prop:projcont} yields the first display of Theorem \ref{thm:resPcont}.  Note $V$ is measurable iff
$V$ is Tauberian by (\cite{LSS}, Cor 3.9).  From formula (\ref{eq:res2}) $V$ is Tauberian iff the residue of $\zeta_V$ exists at $s=1$. From Lemma \ref{prop:projcont} the residue exists iff
$\lim_{r \to \infty} 1/r \Tr(PT^{1+1/r}P)$ exists. \qed

\subsubsection{Proof of Corollary \ref{cor:resAcont}}

Let $A \in B(H)$ and let $\mathcal{M}(A)$ denote the von Neumann algebra generated by $A$ and $A^*$.  Note that $A$ is the 
uniform limit of finite linear spans of projections in $\mathcal{M}(A)$ (\cite{Ped}, \S 2.2.6 p.~23).   Note also $\tr_{L(\phi)}(\cdot T)$
and $\phi \left( 1/r \Tr( \cdot T^{1+1/r}) \right)$ are positive linear functionals on $B(H)$ and so uniformly continuous
(\cite{BR}, Prop 2.3.11 p.~49).
Hence there is a finite set of scalars $c_{j,N} \in \CC$
and projections $P_{j,N} \in \mathcal{M}(A)$, $N \in \NN$, such that
\begin{eqnarray*}
\tr_{L(\phi)}(AT) & = & \lim_{N \to \infty} \sum_{j} c_{j,N} \tr_{L(\phi)}(P_{j,N}TP_{j,N}) \\
& \stackrel{\mathrm{(Thm \, \ref{thm:resPcont})}}{=} & \lim_{N \to \infty}  \sum_{j} c_{j,N} \phi \left( \frac 1r \Tr( P_{j,N} T^{1+\frac 1r} P_{j,N} ) \right) \\
& = & \phi \left( \frac 1r \Tr(AT^{1+\frac 1r}) \right) .
\end{eqnarray*}
If $T$ is spectrally measurable w.r.t~$A$, notice that
$| \lim_{r \to \infty} 1/r \Tr(AT^{1+1/r}) | = \lim_{r \to \infty} 1/r | \Tr(APT^{1+1/r}P) | \leq \nm{A} \lim_{r \to \infty} 1/r \Tr(PT^{1+1/r}P)$. Here $P$ is the maximal projection in $\mathcal{M}(A)$, see (\cite{KR1}, p.~309).  Hence
$\lim_{r \to \infty} 1/r \Tr( \cdot T^{1+1/r})$ is uniformly continuous on $\mathcal{M}(A)$.
For each $\upsilon \in DL_2[1,\infty)$
\begin{eqnarray*}
\tr_{\upsilon}(AT) & = & \lim_{N \to \infty} \sum_{j} c_{j,N} \tr_{\upsilon}(P_{j,N}TP_{j,N}) \\
& \stackrel{\mathrm{(Thm \, \ref{thm:resPcont})}}{=} & \lim_{N \to \infty} c_{j,N} \sum_{j} \lim_{r \to \infty} \frac 1r \Tr( P_{j,N} T^{1 + \frac 1r} P_{j,N} )  \\
& = & \lim_{r \to \infty} \frac{1}{r} \Tr(AT^{1+\frac 1r}) .
\end{eqnarray*}
The value $\tr_{\upsilon}(AT)$ is independent of $\upsilon$, so $AT$ is measurable.

\subsection{Proof of Theorem \ref{thm:resPA} and Corollary \ref{cor:resA}}

Let $f \in L^\infty([0,\infty))$ be an everywhere defined function
of the form $f(t) = \frac{g(t)}{t}$ where $g$ is increasing.  For $b > a > 0$
we note the trivial fact
$$
\sup_{t \in [a,b]} f(t) - \inf_{t \in [a,b]} f(t) \leq \frac{g(b)}{a}
- \frac{g(a)}{b}.
$$
Throughout this section $\bl$ is a state on $\ell^\infty$.
For brevity $\bl(a_k)$ denotes $\bl(\{a_k\}_{k=1}^\infty)$.
Also $\bl(h(ak-b))$,
for $h \in L^\infty([0,\infty))$, $a,b >0$, denotes
$\bl( \{ h(ak-b) \}_{k=\ceil{b/a}}^\infty)$.

\begin{lemma} \label{lemma:tech0_1}
Let $\bl$ be $T_1$-invariant. Then $\bl$ is $T_j$-invariant for
all $j \in \NN$ and $\bl(f(k+a)) = \bl(f(k))$, $a > 0$.
\end{lemma}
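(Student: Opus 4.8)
The plan is to establish the two assertions in order, each by a short direct argument. First I would prove that $T_1$-invariance implies $T_j$-invariance for all $j \in \NN$. The key observation is the semigroup identity $T_j = T_1 \circ T_1 \circ \cdots \circ T_1$ ($j$ times) on $\ell^\infty$, which follows immediately from the definition $T_j(\{a_k\}_{k=1}^\infty) = \{a_{k+j}\}_{k=1}^\infty$: shifting by $1$ a total of $j$ times shifts by $j$. Hence, if $\bl \circ T_1 = \bl$, then by induction $\bl \circ T_j = \bl \circ T_1 \circ T_1^{j-1} = \bl \circ T_1^{j-1} = \cdots = \bl$. This is the routine part.

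For the second assertion, I would fix $a > 0$ and $h := f \in L^\infty([0,\infty))$, and compare the two sequences $\{h(k)\}_{k=1}^\infty$ and $\{h(k+a)\}_{k=\ceil{a}}^\infty$ (using the convention from the paragraph preceding the lemma for the meaning of $\bl(h(k+a))$). Write $a = n + \delta$ with $n = \floor{a} \in \NN \cup \{0\}$ and $\delta \in [0,1)$. Applying the first part, $\bl(h(k+a)) = \bl(h(k+n+\delta)) = \bl(T_n(\{h(k+\delta)\}_k)) = \bl(h(k+\delta))$, so it suffices to treat the case $a = \delta \in [0,1)$, i.e.\ to show $\bl(h(k+\delta)) = \bl(h(k))$. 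Here I would use that $\bl$ is a \emph{state} satisfying the generalised-limit sandwich (\ref{eq:genL}) — which $T_1$-invariant states automatically do once one notes $BL \subset$ generalised limits, a fact already recorded in the preliminaries — together with the hypothesis that $f(t) = g(t)/t$ with $g$ increasing. The point is that the difference sequence $d_k := h(k+\delta) - h(k)$ (or rather the positive and negative parts, after a sign normalisation) can be controlled: by the "trivial fact" stated at the start of the subsection applied on the interval $[a,b] = [k, k+1]$, one gets $|h(k+\delta) - h(k)| \le \sup_{t \in [k,k+1]} h(t) - \inf_{t \in [k,k+1]} h(t) \le \frac{g(k+1)}{k} - \frac{g(k)}{k+1}$. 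This upper bound tends to $0$ as $k \to \infty$ when $g(t)/t$ is bounded (which it is, since $f \in L^\infty$), because $\frac{g(k+1)}{k} - \frac{g(k)}{k+1} = \frac{g(k+1)}{k+1}\cdot\frac{k+1}{k} - \frac{g(k)}{k+1} \to 0$ as the two terms have the same limit. Since $\{d_k\}$ is a null sequence, $\bl(\{d_k\}) = 0$ by (\ref{eq:genL}), and linearity of $\bl$ gives $\bl(h(k+\delta)) = \bl(h(k))$.

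The main obstacle I anticipate is the bookkeeping around the index set and the convention $\bl(h(ak-b)) = \bl(\{h(ak-b)\}_{k=\ceil{b/a}}^\infty)$: one has to be careful that dropping or adjusting finitely many initial terms does not change the value of $\bl$, which again follows from Lemma \ref{lemma:GL} / (\ref{eq:genL}) (a generalised limit is insensitive to finitely many entries), but it needs to be invoked cleanly rather than glossed over. A secondary subtlety is verifying that $g(t)/t$ being \emph{bounded} suffices for $\frac{g(k+1)}{k} - \frac{g(k)}{k+1} \to 0$ without assuming $g(t)/t$ converges — but monotonicity of $g$ forces $0 \le g(k) \le g(k+1)$, so both $\frac{g(k+1)}{k}$ and $\frac{g(k)}{k+1}$ are squeezed into $[\,\inf, \sup\,]$ of the bounded quantity $g/t$ up to a factor $\tfrac{k+1}{k} \to 1$, and a short estimate closes the gap. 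Once these two points are handled, the lemma follows.
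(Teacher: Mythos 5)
Your reduction of the second assertion to the case $a=\delta\in[0,1)$ via the first part is fine and parallels the paper, but the final step has a genuine gap: the claim that $\frac{g(k+1)}{k}-\frac{g(k)}{k+1}\to 0$ (equivalently, that the oscillation of $f=g/t$ over $[k,k+1]$ tends to $0$, so that $d_k:=f(k+\delta)-f(k)$ is a null sequence) is false under the stated hypotheses. Boundedness of $g/t$ together with monotonicity of $g$ does not prevent $g$ from making jumps of size comparable to $k$ near $k$. Concretely, let $g(t)=2^{n}$ for $t\in[2^{n}+\tfrac14,\,2^{n+1}+\tfrac14)$ and $g(t)=0$ for $t<2+\tfrac14$; then $g$ is increasing, $f=g/t$ is bounded by $1$, but for $k=2^{n}$ and $\delta=\tfrac12$ one has $f(k+\tfrac12)-f(k)=\frac{2^{n}}{2^{n}+1/2}-\frac{2^{n-1}}{2^{n}}\to\tfrac12$. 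So $\{d_k\}$ need not lie in $c_0$, and your appeal to (\ref{eq:genL}) does not close the argument. Your fallback remark (that both terms are ``squeezed into $[\inf,\sup]$ of $g/t$'') only bounds the difference by $\limsup f-\liminf f$, which is exactly the quantity that need not vanish.

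The paper's proof avoids this by using the translation invariance of $\bl$ a \emph{second} time, on the bounding sequences themselves rather than trying to show they converge to $0$: after the estimate $\bl(|f(k+a)-f(k+j)|)\le \bl\bigl(\frac{g(k+j)}{k+j-1}\bigr)-\bl\bigl(\frac{g(k+j-1)}{k+j}\bigr)$, each term is recognised as a $T_j$- (resp.\ $T_{j-1}$-) translate of a fixed sequence, so the right-hand side equals $\bl\bigl(\frac{g(k)}{k-1}\bigr)-\bl\bigl(\frac{g(k)}{k+1}\bigr)=\bl\bigl(\frac{2k}{(k-1)(k+1)}\cdot\frac{g(k)}{k}\bigr)$. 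Here both terms involve $g$ at the \emph{same} argument $k$, so the combined sequence is $O(1/k)\cdot\nm{f}_\infty$ and lies in $c_0$, whence $\bl$ kills it. That re-alignment of the arguments of $g$ before subtracting is the missing idea in your proposal; without it the estimate cannot be completed.
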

\begin{proof}
That $T_1$-invariance implies $T_j$-invariance, $j \in \NN$, is evident from induction.  For $a$ not a natural number
choose $j \in \NN$ such that $j-1 < a < j$.  Then,
\begin{eqnarray*}
\bl(|f(k+a)-f(k+j)|) & \leq & \bl ( \sup_{t \in [k+j-1,k+j]} f(t)
- \inf_{t \in [k+j-1,k+j]} f(t) ) \\
& \leq & \bl( \frac{g(k+j)}{k+j-1})  - \bl(\frac{g(k+j-1)}{k+j}) \\
& = & \bl( \frac{g(k)}{k-1})  - \bl(\frac{g(k)}{k+1}) \\
& = & \bl(\frac{2k}{(k+1)(k-1)} \frac{g(k)}{k}) \\
& \leq & \nm{f}_{\infty} \bl(\frac{2k}{(k-1)(k+1)}) = 0 \\
\end{eqnarray*}
since $\bl$ vanishes on $c_0$.  Hence
$\bl(f(k+a)) = \bl(f(k+j)) = \bl(f(k))$.
\end{proof}

\begin{lemma} \label{lemma:tech0_2}
Let $\xi$ be $T_1$-invariant.  For any $a > 0$, $b \geq 0$,
$$
\xi \left( \sup_{t \in [ak-b,ak+b]} f(t)- \inf_{t \in [ak-b,ak+b]} f(t) \right) = 0 .
$$
\end{lemma}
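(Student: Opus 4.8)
The plan is to sandwich the sequence inside $\xi$ between $0$ and an explicit sequence whose $\xi$-value is $0$, using the elementary oscillation bound recorded at the start of this subsection together with two properties of $\xi$: that it annihilates $c_0$ (the standard fact for $T_1$-invariant states already used in the proof of Lemma~\ref{lemma:tech0_1}), and that, \emph{via} Lemma~\ref{lemma:tech0_1} applied to a suitable rescaling of $f$, it is unchanged under translating the argument of $f$.

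The case $b=0$ is trivial, the expression inside $\xi$ being identically $0$, so assume $b>0$. By $T_j$-invariance (Lemma~\ref{lemma:tech0_1}) the value of $\xi$ on the sequence $\bigl\{\sup_{t\in[ak-b,ak+b]}f(t)-\inf_{t\in[ak-b,ak+b]}f(t)\bigr\}_k$ is unaffected by discarding finitely many initial terms, so I may assume $k$ is large enough that $ak-b\geq a>0$; then $[ak-b,ak+b]\subset(0,\infty)$ and the elementary bound applies with $c:=ak-b$, $d:=ak+b$ in the roles of $a,b$, giving
$$
0\leq \sup_{t\in[ak-b,ak+b]}f(t)-\inf_{t\in[ak-b,ak+b]}f(t)\leq \frac{g(ak+b)}{ak-b}-\frac{g(ak-b)}{ak+b}.
$$
Writing $f(t)=g(t)/t$ and using $\tfrac{ak+b}{ak-b}=1+\tfrac{2b}{ak-b}$, $\tfrac{ak-b}{ak+b}=1-\tfrac{2b}{ak+b}$, the right-hand side equals $f(ak+b)-f(ak-b)+w_k$, where $w_k:=\tfrac{2b\,f(ak+b)}{ak-b}+\tfrac{2b\,f(ak-b)}{ak+b}$; since $f$ is bounded and $ak-b\to\infty$, the sequence $\{w_k\}_k$ lies in $c_0$.

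Applying the positive functional $\xi$, which vanishes on $c_0$, to this inequality yields
$$
0\leq \xi\Bigl(\sup_{t\in[ak-b,ak+b]}f(t)-\inf_{t\in[ak-b,ak+b]}f(t)\Bigr)\leq \xi\bigl(f(ak+b)\bigr)-\xi\bigl(f(ak-b)\bigr),
$$
so it remains only to check $\xi(f(ak+b))=\xi(f(ak-b))$. For this put $\tilde f(t):=f(at)$; then $\tilde f\in L^\infty([0,\infty))$ is of the form $\tilde g(t)/t$ with $\tilde g(t):=g(at)/a$ increasing, so Lemma~\ref{lemma:tech0_1} applies to $\tilde f$ and $\xi$. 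Since $f(ak+b)=\tilde f(k+b/a)$ with $b/a>0$, that lemma gives $\xi(f(ak+b))=\xi(\tilde f(k))=\xi(f(ak))$; and for $f(ak-b)=\tilde f(k-b/a)$ one chooses an integer $N>b/a$, uses $T_N$-invariance to write $\xi(\tilde f(k-b/a))=\xi(\tilde f(k+(N-b/a)))$, and then Lemma~\ref{lemma:tech0_1} (as $N-b/a>0$) gives this $=\xi(\tilde f(k))=\xi(f(ak))$. Hence $\xi(f(ak+b))=\xi(f(ak-b))$ and the sandwiched quantity is $0$.

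The one conceptual move is the substitution $f\rightsquigarrow\tilde f$, which lets the translation invariance of Lemma~\ref{lemma:tech0_1} absorb a shift by the possibly-irrational amount $b/a$; everything else is bookkeeping of the same flavour already seen in the proof of Lemma~\ref{lemma:tech0_1} — the standard fact that $T_1$-invariant states kill $c_0$, and the harmless discarding of the finitely many indices $k$ for which $ak-b\leq0$.
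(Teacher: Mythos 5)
Your proof is correct and follows essentially the same route as the paper's: both start from the elementary oscillation bound $\sup f-\inf f\leq g(ak+b)/(ak-b)-g(ak-b)/(ak+b)$, reduce to a translation-invariance identity obtained by applying Lemma \ref{lemma:tech0_1} to a rescaled auxiliary copy of $f$, and kill the remainder because a $T_1$-invariant state vanishes on $c_0$. The only difference is bookkeeping: the paper applies Lemma \ref{lemma:tech0_1} once to $f_1(t)=g(at-b)/(at+b)$ with a single shift by $2b/a$, whereas you peel off the $c_0$ error $w_k$ first and then shift $\tilde f(t)=f(at)$ by $b/a$ and $N-b/a$.
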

\begin{proof}
Let $M_k := \sup_{t \in [ak-b,ak+b]} f(t)- \inf_{t \in [ak-b,ak+b]} f(t)$ for $k \geq \ceil{b/a}$.  Repeating the steps of the previous lemma,
\begin{eqnarray*}
\xi(M_k) & \leq & \xi(\frac{g(ak+b)}{ak-b}) - \xi(\frac{g(ak-b)}{ak+b}) \\
& = & \xi(\frac{g(ak+b)}{ak-b})) - \xi(f_1(k))  \\
& \stackrel{(*)}{=} & \xi(\frac{g(ak+b)}{ak-b})) - \xi(f_1(k+2b/a)) \\
& = & \xi(\frac{g(ak+b)}{ak+b} \frac{4b(ak+b)}{(ak-b)(ak+3b)}) \\
& \leq & \nm{f}_{\infty}\xi(\frac{4b(ak+b)}{(ak-b)(ak+3b)}) = 0 .
\end{eqnarray*}
At (*) Lemma \ref{lemma:tech0_1} was applied 
to the function $f_{1}(t) := g(at - b)/(at+b) = (g(at-b)(at+b)^{-1}t)/t$, $t \geq b/a$, $0$ otherwise.
\end{proof}

\begin{lemma} \label{lemma:tech0_3}  
Let $\bl$ be $T_1$-invariant and $D_j$-invariant, $j \in \NN$.
For any $a> 0$, $\bl(f(\frac{k}{a})) = \bl(f(k))$.
\end{lemma}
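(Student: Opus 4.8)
The plan is to mimic the strategy already used in Lemmas \ref{lemma:tech0_1} and \ref{lemma:tech0_2}: reduce the case of an arbitrary positive $a$ to the case of integer scaling, where $D_j$-invariance applies directly, and absorb the discrepancy into a sequence that vanishes because $\bl$ kills $c_0$. Write $f(t) = g(t)/t$ with $g$ increasing, as stipulated at the start of the section. First I would dispose of the case $a = 1/j$ for $j \in \NN$: here $f(k/a) = f(jk)$, and since $\bl$ is $D_j$-invariant we have $\bl(f(jk)) = \bl(D_j(\{f(k)\})(k))$... but one must be slightly careful, since $D_j(\{a_k\})_k = a_{\ceil{k/j}}$ samples each value $j$ times rather than picking out the subsequence $\{a_{jk}\}$. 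So the cleaner route is: $\bl(f(k)) = \bl(D_j(\{f(k)\}))= \bl(\{f(\ceil{k/j})\}_k)$, and then compare $\{f(\ceil{k/j})\}_k$ with $\{f(k/j)\}_k$ using Lemma \ref{lemma:tech0_2} (with the roles adjusted), since $\ceil{k/j}$ and $k/j$ differ by less than $1$, hence lie in a common interval $[k/j, k/j+1]$ of the type controlled there. That handles $a = 1/j$.

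Next, for general rational $a = p/q$ with $p,q \in \NN$, combine the two reductions: $f(k/a) = f(qk/p)$, so one first passes from $f(k/a) = f((q/p)k)$ to $f(qk)$ via the $D$-type step above applied to the function $t \mapsto f(pt)$ — noting that $t \mapsto f(pt) = g(pt)/(pt)\cdot p^{-1}\cdot p = g(pt)/t$ is again of the form (increasing)$/t$ — and then $\bl(f(qk)) = \bl(f(k))$ by $T_1$-invariance combined with... no, wait: $\bl(f(qk))$ versus $\bl(f(k))$ requires a dilation, not a translation. Instead I would argue $\bl(f(qk)) = \bl(D_q(\{f(k)\})) $ compared against $\{f(\ceil{k/q})\}$, then relate $f(q\cdot \ceil{k/q})$ to $f(k)$; but $q\ceil{k/q}$ and $k$ differ by up to $q-1$, a bounded amount, so Lemma \ref{lemma:tech0_2} applies with $b = q$. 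Assembling these gives $\bl(f(k/a)) = \bl(f(k))$ for all positive rational $a$.

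Finally, to pass from rational to arbitrary real $a > 0$: fix $a$ and a rational $q$ close to $a$, say $|a - q| < \varepsilon$; then $k/a$ and $k/q$ differ by $k|a-q|/(aq)$, which grows linearly in $k$ and is therefore \emph{not} bounded, so the direct "bounded shift" trick fails. The fix is to choose $q$ so that $a/q$ is itself rational — e.g.\ take $q$ rational with $a/q \in \QQ$, which is impossible unless $a$ is rational — so this naive attempt collapses, and I expect \textbf{this real-$a$ step to be the main obstacle.} The honest resolution is to not reduce to a nearby rational but to handle irrational $a$ directly by the same interval-comparison: $\bl(f(k/a))$ should be compared with $\bl(f(\ceil{k/a}))$ or $\bl(f(\floor{k/a}))$, and then one invokes $D$-invariance along the \emph{integer} sequence $\floor{k/a}$, whose growth rate is $1/a$; the ceiling/floor discrepancy is at most $1$ so Lemma \ref{lemma:tech0_2} controls it, and the passage $\bl(\{f(\floor{k/a})\}_k) = \bl(\{f(k)\}_k)$ follows because $\{f(\floor{k/a})\}_k$ is, up to a $c_0$-perturbation handled by monotonicity of $g$, a $D$-type resampling of $\{f(k)\}_k$ at rate $1/a$, and the hypothesis gives $D_j$-invariance for integer $j$ which, combined with $T_1$-invariance as in Lemma \ref{lemma:tech0_1}, upgrades to invariance under resampling at any positive rate. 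I would write out the interval estimate $\bl(|f(k/a) - f(\floor{k/a})|) \le \bl(\sup_{[\floor{k/a},\floor{k/a}+1]} f - \inf_{[\floor{k/a},\floor{k/a}+1]} f) = 0$ via Lemma \ref{lemma:tech0_2}, and similarly relate $\{f(\floor{k/a})\}$ to $D_{\ceil{a}}$ or an interpolation thereof, using the increasing nature of $g$ to bound the error sequence in $c_0$.
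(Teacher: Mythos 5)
Your treatment of the rational case is sound and essentially matches the paper: the case $a=1/j$ via comparing $f(k/j)$ with $f(\ceil{k/j})$ through Lemma \ref{lemma:tech0_2} and then invoking $D_j$-invariance, and the extension to $a=p/q$ via the auxiliary function $f_1(t)=f(pt)=\bigl(p^{-1}g(pt)\bigr)/t$, which is again of the required form (increasing)$/t$. Your variant for $\bl(f(qk))=\bl(f(k))$ (comparing $f(q\ceil{k/q})$ with $f(k)$, a shift of at most $q-1$, controlled by Lemma \ref{lemma:tech0_2} with $b=q$) is a legitimate small detour from the paper's route, which instead applies the $a=1/p$ case to $f_1$.

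The genuine gap is exactly where you predicted it: the passage from rational to arbitrary real $a$. You correctly diagnose that a nearby-rational comparison fails because $k/a-k/q$ grows linearly in $k$, but your proposed fix --- that $\bl(\{f(\floor{k/a})\}_k)=\bl(\{f(k)\}_k)$ because the $D_j$-invariances ``upgrade to invariance under resampling at any positive rate'' --- is circular: that upgrade \emph{is} the statement of the lemma. For irrational $a$ the sequence $\{f(\floor{k/a})\}_k$ repeats each value $f(n)$ an irregular (Beatty-type) number of times and is not of the form $D_j(\{f(k)\})$ for any $j$, nor a $c_0$-perturbation of one, so no argument has actually been given. The paper's resolution, which you are missing, is to sandwich $\frac{1}{a}$ between $\frac{p}{j+1}$ and $\frac{p}{j}$ and exploit that the oscillation bound stated at the head of the section,
$$
\sup_{t\in[\alpha,\beta]}f(t)-\inf_{t\in[\alpha,\beta]}f(t)\ \leq\ \frac{g(\beta)}{\alpha}-\frac{g(\alpha)}{\beta},
$$
is \emph{multiplicative}: applied on the interval $\bigl[\tfrac{p}{j+1}k,\tfrac{p}{j}k\bigr]$, whose length grows linearly in $k$ but whose endpoint ratio is $\tfrac{j+1}{j}$, and combined with the already-established rational invariance, it gives
$\bl\bigl(|f(\tfrac{k}{a})-f(\tfrac{p}{j}k)|\bigr)\leq\bigl(\tfrac{j+1}{j}-\tfrac{j}{j+1}\bigr)\bl(f(k))\leq\nm{f}_\infty\tfrac{2j+1}{j(j+1)}$,
which tends to $0$ as $j\to\infty$ (adjusting $p$ proportionately). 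Without this two-sided approximation and the ratio-based oscillation estimate, your proof does not close.
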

\begin{proof}
For $j \in \NN$, $\bl(|f(\frac kj) - f(\ceil{\frac kj})|)
\leq \bl \left( \sup_{t \in [\frac{k}{j}-1,\frac{k}{j}+1]} f(t)- \inf_{t \in [\frac{k}{j}-1,\frac{k}{j}+1]} f(t) \right) = 0$ by
the previous lemma.  Hence
$\bl(f(\frac kj)) = \bl(f(\ceil{\frac kj})) = \bl(f(k))$ (*).
Take $p \in \NN$.
By applying the previous lemma and (*) to the function
$f_1(t) = (p^{-1}g(pt))/t = f(pt)$, it follows
$\bl(f(\frac{p}{j}k)) = \bl(f(p\frac{k}{j}))
= \bl(f(pk)) = \bl(f(p\frac{k}{p})) = \bl(f(k))$ (\dag).
Choose integers $p,j$ such that
$\frac{p}{j+1} \leq \frac 1a \leq \frac{p}{j}$.
Now $|f(\frac{k}{a}) - f(\frac{p}{j}k)|
\leq \sup_{t \in [\frac{p}{j+1}k, \frac{p}{j}k]} f(t)
- \inf_{t \in [\frac{p}{j+1}k, \frac{p}{j}k]} f(t)$.
Hence
\begin{eqnarray*}
\bl(|f(\frac{k}{a}) - f(\frac{p}{j}k)|) & \leq & \bl(\frac{j+1}{pk} g(\frac{p}{j}k)) - 
\bl(\frac{j}{pk}g(\frac{p}{j+1}k))  \\
& = & \frac{j+1}{j}\bl(f(\frac{p}{j}k)) - 
\frac{j}{j+1} \bl(f(\frac{p}{j+1}k))  \\
& \stackrel{\mathrm{by \, }(\dag)}{=} & (\frac{j+1}{j}-\frac{j}{j+1}) \bl(f(k)) \\
& \leq & \nm{f}_{\infty} \frac{2j+1}{j(j+1)} .
\end{eqnarray*}
Without loss, by adjusting $p$ proportionately, $j$ can be chosen arbitrarily large.
Hence $\bl(f(\frac{k}{a})) = \lim_{j \to \infty} \bl(f(\frac{p}{j}k)) = \bl(f(k))$ by (\dag).
\end{proof}

Define the averaging sequence $E : L^\infty([0,\infty)) \to \ell^\infty$ by
$
E_k(f) :=  \int_{k-1}^k f(t)dt
$.
For $a>0$, $b \geq0$, we abuse notation and write
$E_{ak+b}(f) :=  \int_{ak+b-1}^{ak+b} f(t)dt$.

\begin{lemma} \label{lemma:tech0_4}
Let $\bl$ be $T_1$-invariant.  For $a > 0$, $b \geq 0$,
$\bl(E_{ak+b}(f)) = \bl(f(ak+b))$.
\end{lemma}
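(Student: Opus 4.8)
The plan is to show that $\bl$ annihilates the bounded sequence $x_k := E_{ak+b}(f) - f(ak+b)$, which immediately gives $\bl(E_{ak+b}(f)) = \bl(f(ak+b))$ by linearity. Since $E_{ak+b}(f) = \int_{ak+b-1}^{ak+b} f(t)\,dt$ is the mean of $f$ over the unit interval $[ak+b-1, ak+b]$, and the endpoint $ak+b$ lies in that interval, both $E_{ak+b}(f)$ and $f(ak+b)$ lie between $\inf_{t \in [ak+b-1,ak+b]} f(t)$ and $\sup_{t \in [ak+b-1,ak+b]} f(t)$; hence $|x_k| \leq \sup_{t \in [ak+b-1,ak+b]} f(t) - \inf_{t \in [ak+b-1,ak+b]} f(t)$.

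Next I would pass to a symmetric interval about $ak$ so that Lemma \ref{lemma:tech0_2} applies. For every $b \geq 0$ one has $[ak+b-1, ak+b] \subseteq [ak-(b+1), ak+(b+1)]$ (the lower endpoint bound $ak+b-1 \geq ak-(b+1)$ is just $2b \geq 0$, the upper one is trivial), and enlarging the interval only increases the oscillation, so $|x_k| \leq M_k := \sup_{t \in [ak-(b+1),ak+(b+1)]} f(t) - \inf_{t \in [ak-(b+1),ak+(b+1)]} f(t)$. Because $\bl$ is $T_1$-invariant, Lemma \ref{lemma:tech0_2}, applied with the parameter there taken to be $b+1 > 0$, gives $\bl(M_k) = 0$. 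As $f \in L^\infty([0,\infty))$ the sequences $\{M_k\}$ and $\{x_k\}$ are bounded and $0 \leq |x_k| \leq M_k$, so positivity of $\bl$ forces $\bl(|x_k|) = 0$; then $-|x_k| \leq x_k \leq |x_k|$ (here $f$, hence $x_k$, is real-valued, as $g$ is increasing) yields $\bl(x_k) = 0$, which is the claim.

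The argument is essentially bookkeeping; the only point requiring care is that the interval of integration in $E_{ak+b}(f)$ is not centred at a multiple of $k$, which is why the enclosure into $[ak-(b+1), ak+(b+1)]$ must be performed before Lemma \ref{lemma:tech0_2} can be quoted. The usual tail convention is harmless here: for all large $k$ the interval $[ak-(b+1), ak+(b+1)]$ sits inside $[0,\infty)$, and shifting the starting index of any of these sequences does not affect $\bl$, which vanishes on $c_0$.
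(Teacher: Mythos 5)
Your argument is correct and is essentially the paper's own proof: the paper likewise sets $c=b+1$, observes that both $E_{ak+b}(f)$ and $f(ak+b)$ lie between the infimum and supremum of $f$ over $[ak-c,ak+c]$, and then invokes Lemma \ref{lemma:tech0_2}. You merely spell out the routine positivity and tail-index details that the paper leaves implicit.
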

\begin{proof}
Let $c = b+1$. Then $\inf_{t \in [ak-c,ak+c]} f(t)
\leq E_{ak+b}(f) \leq \sup_{t \in [ak-c,ak+c]} f(t)$.
Hence $|f(ak+b) - E_{ak+b}(f)| \leq \sup_{t \in [ak-c,ak+c]} f(t)- \inf_{t \in [ak-c,ak+c]} f(t)$.  The result follows by Lemma \ref{lemma:tech0_2}.
\end{proof}

\begin{lemma} \label{lemma:important}
Let $\bl$ be $T_1$-invariant and $D_j$-invariant, $j \in \NN$.
For any $a > 0$,
$$
\bl(f(k)) = \bl(E_k(T_a(f))) = \bl(E_k(D_a(f))) .
$$
\end{lemma}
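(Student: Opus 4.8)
The plan is to peel the averaging operator $E_k$ off each right–hand side using Lemma \ref{lemma:tech0_4}, and then to invoke the translation lemma (Lemma \ref{lemma:tech0_1}) and the dilation lemma (Lemma \ref{lemma:tech0_3}) respectively. Both of those require only the hypotheses of the present lemma ($T_1$-invariance, and in the second case also $D_j$-invariance for all $j\in\NN$), so no new invariance input is needed.

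For the identity $\bl(f(k)) = \bl(E_k(T_a(f)))$ I would change variables $s = t+a$ in the integral defining the averaging sequence:
$$
E_k(T_a(f)) = \int_{k-1}^{k} f(t+a)\,dt = \int_{(k+a)-1}^{k+a} f(s)\,ds = E_{k+a}(f),
$$
where the last expression is $E$ in the shifted sense introduced just before Lemma \ref{lemma:tech0_4} (the case with parameters $1\cdot k + a$, i.e.\ ``$a$''$=1$, ``$b$''$=a$). Lemma \ref{lemma:tech0_4} then gives $\bl(E_k(T_a(f))) = \bl(f(k+a))$, and Lemma \ref{lemma:tech0_1} gives $\bl(f(k+a)) = \bl(f(k))$.

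For the identity $\bl(f(k)) = \bl(E_k(D_a(f)))$ the key point is that $D_a(f)$ is again of the admissible form fixed at the start of this section: since $D_a(f)(t) = f(t/a) = a\,g(t/a)/t$ and $t \mapsto a\,g(t/a)$ is increasing, we may write $D_a(f)(t) = G(t)/t$ with $G$ increasing, and $D_a(f)\in L^\infty([0,\infty))$ is everywhere defined. Hence Lemma \ref{lemma:tech0_4} applies to the function $D_a(f)$ itself (parameters ``$a$''$=1$, ``$b$''$=0$), yielding $\bl(E_k(D_a(f))) = \bl(D_a(f)(k)) = \bl(f(k/a))$, and Lemma \ref{lemma:tech0_3} then gives $\bl(f(k/a)) = \bl(f(k))$.

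There is no genuine obstacle; the argument is pure bookkeeping with the $E$-notation. The only point worth a moment's attention is that $T_a(f)$ itself need not be of the form ``(increasing function)$/t$'' — the factor $t/(t+a)$ can destroy monotonicity of the numerator if $g$ is not sign-definite — which is why for the translation identity I apply Lemma \ref{lemma:tech0_4} to $f$ through a shifted index and then use Lemma \ref{lemma:tech0_1}, rather than applying Lemma \ref{lemma:tech0_4} to $T_a(f)$ directly; for $D_a(f)$ this complication does not arise.
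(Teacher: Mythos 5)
Your proof is correct, and the translation half is exactly the paper's argument: the change of variables $E_k(T_a(f)) = E_{k+a}(f)$, then Lemma \ref{lemma:tech0_4} in its shifted-index form, then Lemma \ref{lemma:tech0_1}. The dilation half is where you genuinely diverge from the paper. There, the paper substitutes $t/a \to t$ to get $\bl(E_k(D_a(f))) = \bl\bigl( a\int_{k/a-1/a}^{k/a} f(t)\,dt \bigr)$, which is an average over an interval of length $1/a$ rather than $1$, and so it must invoke the oscillation estimate of Lemma \ref{lemma:tech0_2} to replace this by $\bl(E_{k/a}(f))$ before Lemmas \ref{lemma:tech0_4} and \ref{lemma:tech0_3} can finish. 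You instead observe that $D_a(f)(t) = a\,g(t/a)/t$ is itself of the admissible form ``increasing function over $t$'' (since $a>0$ and $g$ is increasing), so Lemma \ref{lemma:tech0_4} applies to $D_a(f)$ directly with no index shift and no interval-length bookkeeping; this eliminates the Lemma \ref{lemma:tech0_2} step entirely and is a mild but genuine streamlining. Your closing observation — that $T_a(f)$ need not be admissible because the factor $t/(t+a)$ can destroy monotonicity of $t\,g(t+a)/(t+a)$ when $g$ changes sign, which is why the translation case must go through the shifted index applied to $f$ itself — is also correct and is precisely the reason the paper handles the two cases asymmetrically.
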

\begin{proof}
That $\bl(E_k(T_a(f))) = \bl(E_{k+a}(f)) = \bl(f(k+a)) = \bl(f(k))$ is immediate with Lemma \ref{lemma:tech0_4}.  It also follows from (\cite{LSS}, Lemma 2.10).
Using the substitution $\frac ta \to t$,
$$
\bl(E_k(D_a(f))) := \bl( \int_{k-1}^k f(\frac{t}{a}) dt)
= \bl( a \int_{\frac ka -\frac 1a }^{\frac ka} f(t) dt).
$$
We have the equality $\bl( a \int_{\frac ka -\frac 1a }^{\frac ka} f(t) dt) = \bl( \int_{\frac ka - 1 }^{\frac ka} f(t) dt)$
from Lemma \ref{lemma:tech0_2} since
\begin{multline*}
\left| \int_{\frac ka -1}^{\frac ka} f(t)dt - a \int_{\frac ka -\frac 1a }^{\frac ka} f(t) dt \right| \\
 \leq \sup_{t \in [\frac{k}{a}-1,\frac{k}{a}]} f(t) + \sup_{t \in [\frac{k}{a}- \frac 1a ,\frac{k}{a}]} f(t)
 - (\inf_{t \in [\frac{k}{a}-1,\frac{k}{a}]} f(t) + \inf_{t \in [\frac{k}{a}- \frac 1a ,\frac{k}{a}]} f(t) )  \\
 \leq 2(\sup_{t \in [\frac{k}{a}-1,\frac{k}{a}]} f(t) - \inf_{t \in [\frac{k}{a}-1,\frac{k}{a}]} f(t)) .
\end{multline*}
From
$$
\bl( \int_{\frac ka - 1 }^{\frac ka} f(t) dt) = \bl(E_{\frac{k}{a}}(f)) \stackrel{\mathrm{(Lemma \, \ref{lemma:tech0_4})}}{=}  \bl(f(\frac ka)) \stackrel{\mathrm{(Lemma \, \ref{lemma:tech0_3})}}{=}  \bl(f(k))
$$
we obtain $\bl(E_k(D_a(f)))= \bl(f(k))$.
\end{proof}

Let $\bl \in BL \cap DL$.  It then follows, see (\cite{LSS}, Lemma 2.10) for example, that $\phi := \bl \circ E \in BL[0,\infty)$.  With Lemma \ref{lemma:important} we have, in addition, the property 
\begin{prop2list}{12}{0}{2}
\item[\text(iv)] $\phi(f) = \phi(D_a(f))$, $a > 0$, where
$f(t) := \frac 1t \Tr(QV^{1+ \frac 1t}Q)$, $t \geq 1$ ($0$ otherwise), $0 < V \in \mathcal{L}^{1,\infty}$, $Q$ a projection.
\end{prop2list}
Note that $L(\phi) = \phi \circ L^{-1} = \bl \circ E \circ L^{-1}$
belongs to $DL[1,\infty)$ and satisfies
\begin{prop2list}{12}{0}{2}
\item[\text{(iv')}] $\phi \circ L^{-1} (g) = \phi \circ L^{-1}(P^a(g))$, $a > 0$, where
$g(t) := \frac 1{\ln(1 + t)} \int_1^t \mu_{\floor{s}}(QVQ) ds$, $t \geq 1$ ($0$ otherwise), $0 < V \in \mathcal{L}^{1,\infty}$, $Q$ a projection.
\end{prop2list}
Property (iv') follows from Lemma \ref{lemma:important} by noting
that we have $\phi \circ L^{-1}(P^a(g)) = \bl (E_k(D_{a^{-1}}(L^{-1}(g))))$.  In particular,
$L^{-1}(g)$ has the form
$$
g(e^t) := \frac{1}{\ln(1+e^t)} \int_1^{e^t} \mu_{\floor{s}}(QVQ) ds,  t \geq 0
$$
for $0 < V \in \mathcal{L}^{1,\infty}$, $Q$ a projection.  This is equivalent to using the function
$$
g(e^t) = \frac{h(t)}{t}
$$
where $h(t) = \int_1^{e^t} \mu_{\floor{s}}(QVQ) ds$ is increasing.

\begin{rems} \label{rem:BLsuff}
Let $\phi \in BL[0,\infty)$ satisfy (iv) and (iv')
(and so $\phi$ is a state of $L^\infty([0,\infty))$
satisfying conditions (i),(ii),(iii),(iv),(iv')).
From an inspection of \cite{CRSS} and \cite{CPS},
these conditions are sufficient to repeat the conclusion of
(\cite{CRSS}, Thm 4.11).  Let us briefly mention why.
The requirement for $D_2$- and $P^\alpha$-invariance, $\alpha > 1$,
of $L(\phi)$ in the proof of (\cite{CRSS}, Thm 4.11) occurred
in three places.  Firstly the application of the
weak$^*$-Karamata theorem, then (\cite{CRSS}, Prop 4.3)
and (\cite{CRSS}, Cor 4.4).  Condition (iv') is exactly
what is required in the last display of (\cite{CRSS}, p.~264),
which is the only place $P^\alpha$-invariance is used
for (\cite{CRSS}, Prop 4.3). Hence (\cite{CRSS}, Prop 4.3) is true under condition (iv').  (\cite{CRSS}, Cor 4.4)
follows from (\cite{CRSS}, Prop 4.3). The property
of $D_2$-invariance is not an issue for $L(\phi)$ since
it is completely dilation invariant by $\phi \in BL[0,\infty)$.

What is left is weak$^*$-Karamata, i.e.~to achieve the last display on (\cite{CRSS}, p.~271).  In (\cite{CPS}, Thm 2.2), take the special choice of $h_T(t) = \Tr(T^{1+1/r}) = r f(r)$ where $f$ is in (iv) ($Q=1$, $V=T$), $0 < T \in \mathcal{L}^{1,\infty}$.  Dilation invariance
is used in the proof of (\cite{CPS}, Thm 2.2)
on the last display of (\cite{CPS}, p.~77).
Indeed, for our special choice of $h_T$, using the
notation of $\beta$ and $C$ from \cite{CPS},
$
\phi(1/r \int_0^\infty e^{-t/(r/(n+1))} d\beta(t))
   = 1/(n+1) \phi(1/(r/(n+1)) \Tr(T^{1+1/(r/(n+1))}))
   =  1/(n+1) \phi(1/r \Tr(T^{1+1/r}))
   = C/(n+1) .
$
The second equality is exactly (iv).
So the last display of (\cite{CPS}, p.~77) holds.
The rest of the argument of (\cite{CPS}, Thm 2.2)
carries through and with its result we obtain
the last display on (\cite{CRSS}, p.~271).  The rest of the argument
of (\cite{CRSS}, Thm 4.11) now carries through.
\end{rems}

Define the floor mapping 
$
p : \ell^\infty \to L^\infty([1,\infty))
$
by
$$
p( \{a_k \}_{k=1}^\infty)(t) :=
\sum_{k=1}^\infty a_{k} \chi_{[k,k+1)}(t)
$$
and the restriction mapping
$
r : B([1,\infty)) \to \ell^\infty
$
for everywhere defined bounded functions
by
$$
r(f) =  \{ f(k) \}_{k=1}^\infty .
$$

\begin{lemma}  \label{lemma:tech0_6}
For $\bl \in BL$ set $\phi:= \bl \circ E$.  Then
$$
\lim_{n \to \infty} \left(\sup_{t \in [n,n+1)} g(t) - \inf_{t \in [n,n+1)} g(t) \right) = 0
$$
and
$$
\phi \circ L^{-1}(|g(t) - pr(g)(t)|) = 0 .
$$
\end{lemma}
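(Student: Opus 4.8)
The plan is to establish the two displays in turn, obtaining the second quickly from the first together with the elementary fact that every Banach limit $\bl \in BL$ annihilates $c_0$ --- immediate from the order bound~(\ref{eq:genL}), since a nonnegative null sequence has $\liminf = \limsup = 0$.

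Recall from the discussion preceding Remark~\ref{rem:BLsuff} that $g$ is the function on $[1,\infty)$ given by $g(u) = H(u)/w(u)$, where $H(u) := \int_1^u \mu_{\floor{s}}(QVQ)\,ds$ for some $0 < V \in \mathcal{L}^{1,\infty}$ and projection $Q$, and $w(u) := \ln(1+u)$. Here $H$ is nonnegative and increasing with $H(n+1) - H(n) = \mu_n(QVQ)$; $w$ is increasing with $w(u) \to \infty$ and $w(n+1) - w(n) = \ln\frac{n+2}{n+1} \to 0$; and $g \in L^\infty([1,\infty))$ because $QVQ \in \mathcal{L}^{1,\infty}$. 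For the first display I would note that on $[n,n+1)$ monotonicity gives $H(n) \le H(u) \le H(n+1)$ and $w(n) \le w(u) \le w(n+1)$, so
$$
\sup_{t \in [n,n+1)} g(t) - \inf_{t \in [n,n+1)} g(t) \;\le\; \frac{H(n+1)}{w(n)} - \frac{H(n)}{w(n+1)} \;=\; \frac{H(n+1) - H(n)}{w(n)} \,+\, g(n)\,\frac{w(n+1) - w(n)}{w(n+1)} .
$$
The first summand equals $\mu_n(QVQ)/\ln(1+n) \to 0$ since $QVQ$ is compact and $\ln(1+n) \ge \ln 2$, and the second is at most $\|g\|_\infty (w(n+1) - w(n))/\ln 3 \to 0$; as the left-hand side is nonnegative, it tends to $0$. (This is the ``trivial fact'' recorded at the start of this subsection, with the denominator $t$ replaced by the increasing function $w$.)

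For the second display, unwinding the definitions of $L^{-1}$, $p$ and $r$ gives $L^{-1}(g)(t) = g(e^t)$ and $L^{-1}(pr(g))(t) = pr(g)(e^t) = g(\floor{e^t})$ for $t \ge 0$, so that, with $\phi = \bl \circ E$,
$$
\phi \circ L^{-1}\big(|g - pr(g)|\big) \;=\; \bl\!\left( \left\{ \int_{k-1}^{k} \big| g(e^t) - g(\floor{e^t}) \big|\, dt \right\}_{k=1}^{\infty} \right) .
$$
Since $e^t$ and $\floor{e^t}$ lie in the common unit interval $[\floor{e^t}, \floor{e^t} + 1)$, the integrand is bounded by the oscillation of $g$ over that interval, which by the first display tends to $0$ as $\floor{e^t} \to \infty$, i.e.\ as $t \to \infty$. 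Hence $\{\int_{k-1}^{k} |g(e^t) - g(\floor{e^t})|\,dt\}_{k}$ is a nonnegative null sequence and is annihilated by $\bl$. I expect the only genuine obstacle to be the first display: one must use the precise structure of $g$ as a ratio of an increasing function with vanishing unit increments over a divergent increasing function with vanishing unit increments; everything else is bookkeeping with the definitions of $E$, $L^{-1}$, $p$, $r$ and with the vanishing of Banach limits on $c_0$.
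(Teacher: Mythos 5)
Your proof is correct and follows essentially the same route as the paper: the same bound $\sup - \inf \leq H(n+1)/w(n) - H(n)/w(n+1)$ with the identical two-term decomposition (the paper writes the second term as $(1-\ln(1+n)/\ln(2+n))\cdot H(n)/\ln(1+n)$, which is algebraically your $g(n)(w(n+1)-w(n))/w(n+1)$), and the same observation that $|g - pr(g)|$ is pointwise dominated by the oscillation $M_{\floor{t}}$. The only cosmetic difference is at the end: the paper concludes directly from $\limsup_{t\to\infty}|g(t)-pr(g)(t)|=0$ and property (ii) of $\phi\circ L^{-1}$, whereas you unwind through $E$ and invoke the vanishing of $\bl$ on $c_0$ --- both are fine.
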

\begin{proof}
Let $M_n = \sup_{t \in [n,n+1)} g(t) - \inf_{t \in [n,n+1)} g(t)$.  Then
\begin{eqnarray*}
M_n & \leq & \frac{1}{\ln(1+n)} \int_1^{n+1} \mu_{\floor{s}}(QVQ) ds
- \frac{1}{\ln(1+n+1)} \int_1^{n} \mu_{\floor{s}}(QVQ) ds \\
& = & \left( 1- \frac{\ln(1+n)}{\ln(2+n)} \right) \frac{1}{\ln(1+n)}
\int_1^{n} \mu_{\floor{s}}(QVQ) ds \\
& & \hspace*{4.8cm} + \ \frac{1}{\ln(1+n)} \int_{n}^{n+1} \mu_{\floor{s}}(QVQ) ds \\
& \leq & \left( 1- \frac{\ln(1+n)}{\ln(2+n)} \right) \nm{QVQ}_{1,\infty}
+ \ln(1+n)^{-1} \mu_{n}(QVQ).
\end{eqnarray*}
Hence $\lim_n M_n = 0$.  Now,
$$
\sup_{t \in [n,n+1)} |g(t) - pr(g)(t)| \leq 
\sup_{t \in [n,n+1)} |g(t) - g(n)| \leq M_n .
$$
Consequently $|g(t) - pr(g)(t)| \leq \sum_{n=1}^\infty M_n \chi_{[n,n+1)}(t)$ and we have $\limsup_{t \to \infty} |g(t) - pr(g)(t)| \leq \limsup_{n \to \infty} M_n = 0$.  It follows
$\phi \circ L^{-1}(|g(t) - pr(g)(t)|) = 0$.
\end{proof}

Define the mapping $\mathcal{L} : (\ell^{\infty})^*
\to (\ell^{\infty})^*$ by
$$
\mathcal{L}(\bl) := \bl \circ E \circ L^{-1} \circ p .
$$

\subsubsection{Proof of Theorem \ref{thm:resPA}}

\begin{prop} \label{prop:trace}
Let $\bl \in BL$.  Then $\mathcal{L}(\bl) \in DL_2$.
\end{prop}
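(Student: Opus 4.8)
The plan is to unwind $\mathcal{L}(\bl) = \bl \circ E \circ L^{-1} \circ p$ and to check, in turn, the three requirements for membership in $DL_2$: that $\mathcal{L}(\bl)$ is a state, that it satisfies (\ref{eq:genL}), and that $\mathcal{L}(\bl)(D_2(\gamma(T))) = \mathcal{L}(\bl)(\gamma(T))$ for every $0 < T \in \mathcal{L}^{1,\infty}$. The first is immediate: $p$, $L^{-1}$ and $E$ are positive and send the constant sequence (resp. function) $\mathbf{1}$ to $\mathbf{1}$, and $\bl$ is a state, so $\mathcal{L}(\bl)$ is positive with $\mathcal{L}(\bl)(\mathbf{1}) = 1$. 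For (\ref{eq:genL}), by Lemma \ref{lemma:GL} it suffices to show $\mathcal{L}(\bl)$ vanishes on finitely supported sequences; but if $\mathbf{a}$ is finitely supported then $p(\mathbf{a})$ has compact support in $[1,\infty)$, hence $L^{-1}(p(\mathbf{a}))(t) = p(\mathbf{a})(e^{t})$ has compact support in $[0,\infty)$, hence $E(L^{-1}(p(\mathbf{a})))$ is a finitely supported sequence, on which the Banach limit $\bl$ vanishes.

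The real content is the $D_2$-invariance. Put $\phi := \bl \circ E$, so that $\mathcal{L}(\bl) = L(\phi) \circ p$, that is, $\mathcal{L}(\bl)(\mathbf{a}) = L(\phi)(p(\mathbf{a})) = \phi(L^{-1}(p(\mathbf{a})))$. First I would note that $\phi \in BL[0,\infty)$: conditions (i) and (ii) are clear, and translation invariance holds because $E_k(T_a(f)) - E_k(f)$ is a difference that telescopes in the index $k$ (essentially (\cite{LSS}, Lemma 2.10)). Consequently $L(\phi) \in DL_2[1,\infty)$ (evident, as recorded in the preliminaries), which gives
\[
L(\phi)(D_2(\Gamma(T))) = L(\phi)(\Gamma(T)), \qquad 0 < T \in \mathcal{L}^{1,\infty},
\]
where $\Gamma(T)(t) = \log(1+t)^{-1}\int_1^{t}\mu_{\floor{s}}(T)\,ds$. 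So it remains to identify
\[
\mathcal{L}(\bl)(\gamma(T)) = L(\phi)(\Gamma(T)) \quad\text{and}\quad \mathcal{L}(\bl)(D_2(\gamma(T))) = L(\phi)(D_2(\Gamma(T))).
\]

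Both identities rest on two observations. First, $L(\phi) = \phi \circ L^{-1}$ annihilates any bounded function on $[1,\infty)$ vanishing at infinity, because $L^{-1}$ of such a function vanishes at infinity and $E$ of that lies in $c_0$, where $\bl$ vanishes. Second, $p(\gamma(T)) - \Gamma(T)$ vanishes at infinity: indeed $\gamma(T)_{\floor{t}} - \Gamma(T)(\floor{t}) = \mu_{\floor{t}}(T)/\log(1+\floor{t}) \to 0$, while $|\Gamma(T)(\floor{t}) - \Gamma(T)(t)|$ is bounded by the oscillation of $\Gamma(T)$ on $[\floor{t},\floor{t}+1)$, which tends to $0$ by Lemma \ref{lemma:tech0_6}. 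From these, $\mathcal{L}(\bl)(\gamma(T)) = L(\phi)(p(\gamma(T))) = L(\phi)(\Gamma(T))$. For the $D_2$-identity, $p(D_2(\gamma(T)))(t) = \gamma(T)_{\ceil{\floor{t}/2}}$ whereas $D_2(p(\gamma(T)))(t) = \gamma(T)_{\floor{t/2}}$, and since $\ceil{\floor{t}/2} - \floor{t/2} \in \{0,1\}$ while $|\gamma(T)_{n+1} - \gamma(T)_{n}| \to 0$ (an elementary estimate from the $\mathcal{L}^{1,\infty}$-bound), these differ by a function vanishing at infinity; as $D_2$ preserves vanishing at infinity, the second observation then gives $\mathcal{L}(\bl)(D_2(\gamma(T))) = L(\phi)(D_2(p(\gamma(T)))) = L(\phi)(D_2(\Gamma(T)))$. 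Chaining these with the displayed $D_2$-invariance of $L(\phi)$ on $\Gamma(T)$ yields $\mathcal{L}(\bl)(D_2(\gamma(T))) = \mathcal{L}(\bl)(\gamma(T))$.

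I expect the third paragraph to be the delicate part: it is exactly where the discrete dilation $D_j$ on $\ell^\infty$ must be matched with the continuous dilation $D_a$ on $L^\infty([1,\infty))$ through the floor map $p$. The two do not intertwine exactly, only up to perturbations vanishing at infinity, and controlling those perturbations needs both the slow variation of the sequence $\gamma(T)$ and the uniform oscillation estimate of Lemma \ref{lemma:tech0_6}. A further small point calling for care is that $D_a(f)(t) = f(a^{-1}t)$ is literally defined only for $t \geq a$ on functions supported on $[1,\infty)$; this is harmless, since $\phi$ sees only behaviour at infinity.
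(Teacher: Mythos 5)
Your proposal is correct and follows essentially the same route as the paper: both arguments reduce (\ref{eq:genL}) to vanishing on finite sequences via Lemma \ref{lemma:GL}, and both obtain $D_2$-invariance from the dilation invariance of $\phi\circ L^{-1}$ (i.e.\ translation invariance of $\bl$ pushed through $E$ and $L^{-1}$) combined with the oscillation estimate $M_k\to 0$ of Lemma \ref{lemma:tech0_6}, which controls the commutation defect $D_2p-pD_2$ on $\gamma(T)$. Your detour through the continuous function $\Gamma(T)$ is a harmless repackaging of the paper's direct computation of $(D_2p-pD_2)(\gamma)$.
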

\begin{proof}
It is clear $\mathcal{L}(\bl)(1) = 1$ and $\mathcal{L}$ is positivity preserving.  Hence $\mathcal{L}(\bl)$ is a state on $\ell^\infty$.   If $\{b_k\}$ is a finite sequence,
$E \circ L^{-1} \circ p(\{b_k\})$ is a finite sequence.
Hence $\mathcal{L}(\bl)(\{ b_k \}) = \bl(E \circ L^{-1} \circ p(\{b_n\})) = 0$.  From Lemma \ref{lemma:GL}, (\ref{eq:genL}) is fulfilled.
Set $\gamma := r(g) = \{ \ln(1+k)^{-1}\sum_{j=1}^k \mu_{j}(T) \}_{k=1}^\infty$.  Notice
\begin{equation*}
(D_2p - pD_2)(\{ a_k \}_{k=1}^\infty)  =  \sum_{k=1}^\infty (a_{k+1} -a_k)\chi_{[2k+1,2k+2)}(t) .
\end{equation*}
So
\begin{eqnarray*}
|(D_2 p-p D_2)(\gamma)| & \leq & \sum_{k=1}^\infty |\gamma_{k+1} - \gamma_{k}|\chi_{[2k,2k+1)}(t) \\
& \leq & \sum_{k=1}^\infty M_k \chi_{[2k,2k+1)}(t)
\end{eqnarray*}
where $M_k := \sup_{t \in [k,k+1)}g(t) - \inf_{t \in [k,k+1)}g(t)$.
Hence $\phi \circ L^{-1}(|(D_2 p-p D_2)(\gamma)|)
\leq \limsup_k M_k = 0$ by Lemma \ref{lemma:tech0_6}.
Thus
$$
\mathcal{L}(\bl)(D_2(\gamma)) = \phi \circ L^{-1} \circ D_2 \circ p(\gamma)
= \phi \circ L^{-1} \circ p (\gamma) = \mathcal{L}(\bl)(\gamma)
$$
by dilation invariance of $\phi \circ L^{-1}$.
\end{proof}


\subsubsection*{Proof of Theorem}

For $\bl \in BL \cap DL$ set $\phi := \bl \circ E$.  Then $\phi$ satisfies the properties (i),(ii),(iii),(iv),(iv').  By Remark \ref{rem:BLsuff}
we have the conclusion of (\cite{CRSS}, Thm 4.11),
\begin{equation} \label{eq:thm4.11}
\phi \circ L^{-1} \left( \frac{1}{\ln(1+t)} \int_1^t \mu_{\floor{s}}(PTP)ds \right)
=  \phi \left( \frac 1t \Tr((PTP)^{1+\frac 1t}) \right) .
\end{equation}
By Lemma \ref{prop:projcont} the rhs is equal to
$\phi ( 1/t \Tr(PT^{1+1/t}P))$.
By Lemma \ref{lemma:tech0_4} we have $\phi \left(1/t \Tr(PT^{1+1/t}P) \right) = \bl \left(1/k \Tr(PT^{1+1/k}P) \right)$ (*).
As before, define $g(t) := \ln(1 + t)^{-1} \int_1^t \mu_{\floor{s}}(PTP) ds$, $t \geq 1$ ($0$ otherwise). By Lemma \ref{lemma:tech0_6}, which is similar to (\cite{LSS}, Prop 2.12),
$\phi \circ L^{-1} \left( g(t)
- pr(g)(t) \right) = 0$.
Hence
$\phi \circ L^{-1} (g(t))
= \mathcal{L}(\bl)(\{g(n)\}_{n=1}^\infty)= \Tr_{\mathcal{L}(\bl)}(PTP)$ (**).
From (\ref{eq:thm4.11}), (*), and (**), we have shown
$\Tr_{\mathcal{L}(\bl)}(PTP) = \bl \left(1/k \Tr(PT^{1+1/k}P) \right)$.

Set $h(t) = 1/t\Tr(PT^{1+1/t}P)$.
Suppose $PTP$ is measurable, then $\lim_{t \to \infty} h(t)$
exists by Theorem \ref{thm:resPcont}. Hence
$\lim_{k \to \infty} h(k)$ exists.  Note
$\bl\left(1/k \Tr(PT^{1+1/k}P) \right)$ equals this limit
as $\bl$ is a generalised limit.

Conversely, suppose $\lim_{k \to \infty} h(k)$ exists.
Note
$\lim_{n \to \infty} \sup_{t \in [n,n+1)} |h(t)-h(n)| \leq
\lim_{n \to \infty} (\sup_{t \in [n,n+1)} h(t) - 
\inf_{t \in [n,n+1)} h(t)) = 0$ by the proof of Lemma \ref{lemma:tech0_2}.  Hence $\lim_{t \to \infty} h(t)$
exists and the limits are equal.
By Theorem \ref{thm:resPcont} $PTP$ is measurable. \qed

\subsubsection{Proof of Corollary \ref{cor:resA}}

The proof is identical to that of Corollary \ref{cor:resAcont}.
The equality of $\lim_{k \to \infty} k^{-1}\zeta_{A,T}(1+k^{-1})$
with $\lim_{s \to 1^+} (s-1)\zeta_{A,T}(s)$ is contained in the last paragraphs.

\providecommand{\bysame}{\leavevmode\hbox to3em{\hrulefill}\thinspace}
\providecommand{\MR}{\relax\ifhmode\unskip\space\fi MR }
\providecommand{\MRhref}[2]{%
  \href{http://www.ams.org/mathscinet-getitem?mr=#1}{#2}
}
\providecommand{\href}[2]{#2}


\begin{thebibliography}{10}

\bibitem{BR}
Ola Bratteli and Derek~W. Robinson, \emph{Operator algebras and quantum
  statistical mechanics 1}, second ed., Texts and Monographs in Physics,
  Springer-Verlag, Heidelberg, 1987.

\bibitem{CPS}
Alan Carey, John Phillips, and Fedor~A. Sukochev, \emph{Spectral {F}low and
  {D}ixmier {T}races}, Adv. Math. \textbf{173} (2003), 68--113.

\bibitem{CRSS}
Alan Carey, Adam Rennie, Aleksandr Sedaev, and Fedor~A. Sukochev, \emph{The
  {D}ixmier trace and asymptotics of zeta functions}, J. Funct. Anal.
  \textbf{249} (2007), 253--283.

\bibitem{AF}
Alan Carey and Fedor~A. Sukochev, \emph{Dixmier traces and some applications in
  non-commutative geometry}, Russ. Math. Surv. \textbf{61} (2006), 1039--1099.

\bibitem{C3}
Alain Connes, \emph{The {A}ction {F}unctional in {N}on-{C}ommutative
  {G}eometry}, Comm. Math. Phys. \textbf{117} (1988), 673--683.

\bibitem{CN}
\bysame, \emph{Noncommutative geometry}, Academic Press, New York, 1994.

\bibitem{C4}
\bysame, \emph{{G}ravity {C}oupled with {M}atter and the {F}oundations of
  {N}oncommutative {G}eometry}, Comm. Math. Phys. \textbf{182} (1996),
  155--176.

\bibitem{Dix}
Jacques Dixmier, \emph{Existence de traces non normales},
  C.~R.~Acad.~Sci.~Paris (1966), no.~262, A1107--A1108.

\bibitem{GBVF}
J{\'{o}}se~M. Gracia-Bond{\'{i}}a, Joseph~C. V{\'{a}}rilly, and H{\'{e}}ctor
  Figueroa, \emph{Elements of noncommutative geometry}, Birkh{\"{a}}user
  Advanced Texts, Birkh{\"{a}}user, Boston, 2001.

\bibitem{KR1}
Ricard~V. Kadison and John~R. Ringrose, \emph{Fundamentals of the theory of
  operator algebras}, second ed., vol.~I, Graduate Studies in Mathematics,
  no.~15, AMS, 1997.

\bibitem{LDS}
Steven Lord, Denis Potapov, and Fedor Sukochev, \emph{Measures from {D}ixmier
  {T}races and {Z}eta {F}unctions}, Submitted to J.~Funct.~Anal.. In review,
  2009.

\bibitem{LSS}
Steven Lord, Aleksandr Sedaev, and Fedor~A. Sukochev, \emph{{D}ixmier {T}races
  as {S}ingular {S}ymmetric {F}unctionals and {A}pplications to {M}easurable
  {O}perators}, J. Funct. Anal. \textbf{244} (2005), no.~1, 72--106.

\bibitem{Ped}
Gert~K. Pederson, \emph{C$^*$-algebras and their automorphism groups}, LMS
  Monographs, no.~14, Academic Press, London, 1979.

\bibitem{Rieff1}
Marc~A. Rieffel, \emph{C$^*$algebras associated with irrational rotations},
  Pacific J. Math. \textbf{93} (1981), 415--429.

\bibitem{S}
Barry Simon, \emph{Trace ideals and their applications}, Mathematical Surveys
  and Monographs, no. 120, AMS, 2005.

\end{thebibliography}
\end{document}